\newcommand{\A}{\mathcal{A}}
\newcommand{\poly}{\mathcal{P}}
\def\P{\mathbb{P}}
\def\E{\mathbb{E}}
\DeclareMathOperator{\Bin}{Bin}
\newtheorem{theorem}{Theorem}[section]
\newtheorem{lemma}[theorem]{Lemma}
\newtheorem{corollary}[theorem]{Corollary}
\newcommand{\AND}{\textsf{AND}}
\newtheorem{lem}[theorem]{Lemma}
\newtheorem{prop}[theorem]{Proposition}
\newtheorem{clm}[theorem]{Claim}
\newtheorem{claim}[theorem]{Claim}
\newtheorem*{thm*}{Theorem}
\theoremstyle{definition}
\theoremstyle{remark}
\numberwithin{equation}{section}
\newcommand{\abs}[1]{\left\vert#1\right\vert}
\newcommand{\set}[1]{\left\{#1\right\}}
\newcommand{\br}[1]{\left[#1\right]}
\newcommand{\sr}[1]{\left(#1\right)}
\newcommand{\N}{\mathbb{N}}
\newcommand{\Q}{\mathbb{Q}}
\newcommand{\R}{\mathbb{R}}
\newcommand{\eqdef}{\stackrel{\mathrm{def}}{=}}
\renewcommand{\Pr}{}
\let\Pr\relax
\DeclareMathOperator*{\Pr}{\mathbb{P}}
\def\squareforqed{\hbox{\rlap{$\sqcap$}$\sqcup$}}
\def\qed{\ifmmode\squareforqed\else{\unskip\nobreak\hfil
\penalty50\hskip1em\null\nobreak\hfil\squareforqed
\parfillskip=0pt\finalhyphendemerits=0\endgraf}\fi}
\newcommand{\vphi}{\varphi}
\newcommand{\Ze}{\mathrm{Zeros}}
\newcommand{\sett}[2]{ \left\{  #1 \ : \ #2  \right\} }
\newcommand{\Choose}[2]{\left(#1 \atop #2 \right)}
\newcommand{\intn}{I_n}
\begin{document}

%\author{Ron Peled\thanks{\texttt{peledron@stat.berkeley.edu.} Department of Statistics, UC Berkeley.
%Supported by Microsoft Research and NSF grant DMS-0605166. } \and
\author{Ron Peled\thanks{\texttt{peled@cims.nyu.edu} Courant Institute, New York University.
Research supported by Microsoft Research and NSF grant DMS-0605166. } \and
Ariel Yadin\thanks{\texttt{A.Yadin@statslab.cam.ac.uk.} Center for Mathematical Sciences, University of Cambridge, UK.} \and Amir
Yehudayoff\thanks{\texttt{amir.yehudayoff@gmail.com.} Institute for Advanced Study, Princeton NJ.
Partially supported by NSF grant CCF 0832797.  }}
\title{The Maximal Probability that $k$-wise Independent Bits are All $1$}

\date{}
\maketitle

\begin{abstract}
A $k$-wise independent distribution on $n$ bits is a joint
distribution of the bits such that each $k$ of them are
independent. In this paper we consider $k$-wise independent
distributions with identical marginals, each bit has probability
$p$ to be $1$. We address the following question: how high can the
probability that all the bits are 1 be, for such a distribution?
For a wide range of the parameters $n,k$ and $p$ we find an
explicit lower bound for this probability which matches an upper
bound given by Benjamini et al., up to multiplicative factors of lower order. 
%ref
In particular, for fixed $k$, we obtain the sharp asymptotic behavior.
%end ref
The question we investigate can be viewed as a relaxation of a major
open problem in error-correcting codes theory, namely, how large
can a linear error correcting code with given parameters be?

The question is a type of discrete moment problem, and our
approach is based on showing that bounds obtained from the theory
of the classical moment problem provide good approximations for
it. The main tool we use is a bound controlling the change in the
expectation of a polynomial after small perturbation of its zeros.
\end{abstract}
\begin{section}{Introduction}
The problem of generalized inclusion-exclusion inequalities has
been considered by many authors
\cite{B1854,B37,DS67,K75,P88,BP89,GX90,LN90}. In this problem one
has $n$ events $A_1,\ldots, A_n$ and the probabilities of
intersections $\cap_{i\in S} A_i$ for all $S$ with $|S|\le k$.
Given this information the goal is to bound the probability of
$\cup_{i=1}^n A_i$ from above and from below. The classical
Bonferroni inequalities state that the odd and even partial sums
of the inclusion-exclusion formula provide such upper and lower
bounds, respectively. But in many cases these bounds are far from
being sharp, in the sense that much tighter bounds may be deduced
from the same information.

In this paper we address a special case of this question. In our
setting the events all have equal probability $\P(A_i)=p$ and are
$k$-wise independent; that is, $\P(\cap_{i\in S} A_i)=p^{|S|}$
whenever $|S|\le k$. When referring to this case we shall use a
slightly different terminology and refer to the events
$A_1,\ldots, A_n$ as $n$ bits. 
%ref
For convenience, we consider the intersection of events instead of the union,
which is equivalent by de Morgan's rules.
%end ref
With this terminology we are
interested in estimating the probability of the $\AND$ of the bits
given that their joint distribution is $k$-wise independent with
identical marginals $p$. Besides the simplification arising
from considering a particular case, this case is of special
interest from several points of view.

First, $k$-wise independent distributions play a key role in the
computer science literature where they are used for
derandomization (there are many references, e.g. the survey
\cite{LW95}). Here is an example for the use of $k$-wise
independence in this context: Assume that a given efficient
\emph{probabilistic} algorithm $A$ works, even when the algorithm uses
pairwise independent bits instead of truly independent random bits. Since
there are pairwise independent distributions with small support,
this implies that the algorithm can be converted to an \emph{efficient
deterministic} algorithm. In order to prove that $A$ indeed works
with access to only pairwise independent bits, one needs to show that the
probabilities of certain events (that depend on $A$)
do not change significantly when ``moving'' to a pairwise independent distribution.

Second, there is a strong connection between linear error
correcting codes and $k$-wise independent distributions (when
$p=\frac{1}{q}$ for a prime power $q$). Given a linear
error-correcting code over $(GF(q))^n$ with minimal distance $d$,
one may obtain a $k$-wise independent distribution with $k=d-1$ and
$p=\frac{1}{q}$ by 
%ref - replaced "considering" by:
sampling uniformly at random from  
%end ref
the dual of the code and replacing the resulting codeword by the indicator word of its zeros. Although by this construction one gets only distributions with a certain structure,
this is by far the most common way to construct $k$-wise independent distributions.
%NEW
%According to this construction of $k$-wise independent distributions,there is 
It gives a simple connection between the size of the code $C$,
and the probability of getting the all $1$'s vector:
$$\P[(1,\ldots,1)] = \frac{1}{|C^\perp|} = \frac{|C|}{q^n},$$
where the probability is over the $k$-wise independent distribution constructed
from $C$, and $C^\perp$ is the dual of $C$.
%END NEW
A very basic and open question in the theory of
error correcting codes is how large can a linear error-correcting
code be, for given $n,d,q$ (\cite{MS77}, see also \cite{DY04}).
A large code immediately implies a large probability for the $\AND$ of the bits,
hence investigating the maximal probability that the $\AND$ event can achieve for a given
triplet $n,k,p$ can be thought of as a relaxation of the error correcting codes question.
%NEW
However, in general, these two questions turn out not to be equivalent,
even asymptotically in $n$, as an example from \cite{BGGP} shows:
\begin{description}
\item[(i)] For every $3$-wise independent distribution $\mu$ on $n$ bits with marginal
probabilities $1/3$ that is obtained from a linear code as (roughly) described above,
$\mu[(1,\ldots,1)] = O(\frac{1}{n \log n})$
(this is a version of Roth's theorem on 3-term arithmetic progressions for $(GF(3))^n$, see \cite{M95}.)
\item[(ii)] There exists a $3$-wise independent distribution $\mu'$ on $n$ bits with marginal
probabilities $1/3$ such that $\mu'[(1,\ldots,1)] = \Omega(\frac{1}{n})$.
\end{description}
%END NEW
%ref
An important property of the code-based constructions of $k$-wise independent
distributions is that such distributions have small support.
The support size is important for derandomization, as discussed above.
In this paper we show existence of $k$-wise independent distributions that
assign large probability to $(1,\ldots,1)$,
but we do not show that they have small support.
%end ref

Third, the question has intrinsic mathematical beauty. From an analytic perspective, when
attempting its solution one is naturally led to discrete analogues
of classical moment problems (classical quadrature formulas).
Although some investigation of such discrete moment problems
exists in the literature \cite[Chap. VIII]{KN77},\cite{P88,BP89}, they are much
less understood than their classical counterparts.
Still, the classical theory sheds light on our problem and enables us to make
progress on it and obtain quite precise answers. From a more geometric standpoint, the
set of $k$-wise independent distributions is an interesting convex body, the structure
of which we understand quite poorly. In this work we try to at least understand the projection
of this body in one specific direction.

Finally, in the case $p=\frac{1}{2}$, the maximal probability of the $\AND$ event is also
the maximal probability for any fixed string of bits
(roughly, `translating' a distribution by a constant vector, does not `affect' the $k$-wise independence).
In other words, for $p=\frac{1}{2}$ this maximal probability corresponds to the minimal
 min-entropy possible for a $k$-wise independent distribution, which seems a very basic property.

This work continues a previous work \cite{BGGP}
in which an (explicit) upper bound for the $\AND$ event was found
(as well as some lower bounds). The upper bound was derived as a solution to a relaxed maximization problem (see Section~\ref{relaxed_problem_section}) which appears quite similar to the original problem. The similarity makes it natural to expect that the upper bound be quite close to the true maximal probability. Indeed, in this work we affirm this expectation in a large regime of the parameters. 
%ref - removed "To describe our results let us first introduce some notation."

\subsection{Results}

Denote by $M(n,k,p)$ the maximal probability of the $\AND$ event
for a $k$-wise independent distribution on $n$ bits with marginals
$p$. For odd $k$ it is shown in \cite{BGGP} that
\begin{equation}\label{odd_case_equality}
M(n,k,p) = pM(n-1,k-1,p) \qquad \text{($k$ odd)},
\end{equation}
hence it is enough to consider the case of even $k$. It is also shown there that
\begin{equation}\label{tilde_M_bounds_M}
M(n,k,p)\le\tilde{M}(n,k,p)
\end{equation}
where $\tilde{M}(n,k,p)$ is the solution to a certain maximization problem (see Section~\ref{relaxed_problem_section}) and satisfies for even $k$,
\begin{equation}\label{value_of_tilde_M}
\tilde{M}(n,k,p) = \frac{p^n}{\P(\Bin(n,1-p)\le \frac{k}{2})}.
\end{equation}

%NEW   (actually changed)
%For even $k$, denote by
%$$ \tilde{M}(n,k,p) = \frac{p^n}{\P(\Bin(n,1-p)\le \frac{k}{2})}  $$
%(this is the upper bound from \cite{BGGP}).
%\begin{maintheorem} \label{thm: main}
%There exist constants $c_1,c_2,c_3 > 0$ such that the following holds.
%Let $n \in \N$, $k \in \N$ even, and $0 < p < 1$.
%Let $N = np(1-p) -1$.
%
%If $k \leq c_1 \cdot N$, then
%\begin{equation*}
%\frac{c_3}{k} \exp \sr{- \frac{c_2 \cdot k}{V(N/k)}} \tilde{M}(n,k,p)\le M(n,k,p)\le \tilde{M}(n,k,p),
%\end{equation*}
%where $V(a)=\exp \Big( \sqrt{\log(a)\log\log(a)} \Big)$.
%\end{maintheorem}
%
%%NEW
%The theorem is stated for even $k$;
%the case where $k$ is odd
%can be analyzed using \eqref{odd_case_equality} below.
%END NEW

%In this section we present our main result. Recall that $M(n,k,p)$
%stands for the maximal probability of the $\AND$ event under a
%$k$-wise independent distribution on $n$ bits with marginal
%probability $p$. Recall also that $\tilde{M}(n,k,p)$ is given in
%\eqref{value_of_tilde_M} and that $M(n,k,p)\le\tilde{M}(n,k,p)$.
Our main result is a lower bound for $M(n,k,p)$ matching the bound
given by $\tilde{M}(n,k,p)$ up to multiplicative factors of lower
order, in a large regime of the parameters. Specifically:
\begin{theorem} \label{main_theorem}
There exist constants $c_1,c_2,c_3 > 0$ such that the following holds.
Let $n \in \N$, $k \in \N$ even, and $0 < p < 1$.
Let $N = np(1-p)-1$.
Assume
\begin{equation} \label{eqn:  constraint on k}
k \leq c_1 \cdot N.
\end{equation}
Then,
\begin{equation}
M(n,k,p)\ge \frac{c_3}{k}
\exp \sr{-c_2 \cdot \frac{k}{V(N/k)}} \tilde{M}(n,k,p),
\end{equation}
where $V(a)=\exp \Big( \sqrt{\log(a)\log\log(a)} \Big)$.
\end{theorem}

The cases where \eqref{eqn:  constraint on k} does not hold
are not covered by Theorem \ref{main_theorem}.
Some partial results on these cases were given in
\cite{BGGP}. For the case $n(1-p)\le\frac{k}{2}$ the bound $p^n\le
M(n,k,p)\le \tilde{M}(n,k,p)\le 2p^n$ was shown, and for the case
$(n-1)p\le1$ it was shown that $M(n,k,p)=p^k$. The case $k=2$ was
also solved there.

To better understand the bound given in Theorem~\ref{main_theorem}, 
we present some particular cases in the following
\begin{corollary}\label{main_result_corollary}
There exist $C,c>0$ such that for all $n \in \N$, $k \in \N$ even, and $0 < p < 1$, letting $N = np(1-p)-1$ we have
%for all $n,k,p$ as in Theorem \ref{main_theorem}. Let $N=np(1-p)-1$.
\begin{enumerate}
\item For every $m>0$, there exists $N_0 = N_0(m)$ such that
if $N > N_0$ and $k \leq (\log N)^m$, then
\begin{equation*}
M(n,k,p) \geq \frac{c}{k}\tilde{M}(n,k,p) .
\end{equation*}

\item For every $0 < \beta < 1$, there exists $c(\beta) > 0$ and $N_0=N_0(\beta)$ such that
if $N>N_0$ and $k \leq N^{\beta}$, then
\begin{equation*}
M(n,k,p) \geq c \exp \sr{-\frac{k}{\exp (c(\beta) \sqrt{\log (k)\log\log(2k)})} } \tilde{M}(n,k,p) .
\end{equation*}
\item For any $k$ satisfying $k\le cN$,
\begin{equation*}
M(n,k,p)\ge ce^{-C k}\tilde{M}(n,k,p) .
\end{equation*}
\end{enumerate}
\end{corollary}

%To understand Theorem~\ref{thm: main} better, we highlight some of its special cases.
By estimating $\tilde{M}(n,k,p)$ (using \eqref{value_of_tilde_M} and Claim~\ref{tilde_M_estimates} below) in the first two cases of the above corollary we obtain, using \eqref{tilde_M_bounds_M}, explicit two-sided bounds on $M(n,k,p)$. They show that for a large range of the parameters, the leading order behavior of $M(n,k,p)$ is identified and for the case of constant $k$, the exact asymptotics is determined, as follows:
%The bounds of the previous corollary translate, using \eqref{tilde_M_bounds_M} and \eqref{value_of_tilde_M}, to the following explicit two-sided bounds on $M(n,k,p)$.
%Plugging in the value of $\tilde{M}$ from \eqref{value_of_tilde_M} and \eqref{tilde_M_bounds_M}
\begin{corollary}\label{intro_corollary}
There exist $C,c>0$ such that for all $n \in \N$, $k \in \N$ even, and $0 < p < 1$, letting $N = np(1-p)-1$ we have
%There exists $C>0$ such that if $k\ge 2$ is even and $N=np(1-p)-1$ then
%There exists $C>0$ such that for all even $k\ge 2$:
\begin{enumerate}
\item For every $m>0$, there exists $N_0 = N_0(m)$ such that
if $N > N_0$ and $k \leq (\log N)^m$, then
%If $k\le (\log N)^m$ for some $m>0$ and $N>N_0(m)$, then
\begin{equation}
\frac{c}{\sqrt{k}}\left(\frac{pk}{2e(1-p)n}\right)^{k/2}\le M(n,k,p)\le C\sqrt{k}\left(\frac{pk}{2e(1-p)n}\right)^{k/2}.
\end{equation}
\item For every $0 < \beta < 1$, there exists $c(\beta) > 0$ and $N_0=N_0(\beta)$ such that
if $N>N_0$ and $k \leq N^{\beta}$, then
%If $k\le N^{\beta}$ for some $0<\beta<1$ and $N>N_0(\beta)$, then
\begin{equation}
\frac{1}{C}e^{-\frac{k}{U(k,\beta)}}\left(\frac{pk}{2e(1-p)n}\right)^{k/2}\le M(n,k,p)\le C\sqrt{k}e^{\frac{k^2}{2n}}\left(\frac{pk}{2e(1-p)n}\right)^{k/2},
\end{equation}
where $U(k,\beta) \eqdef \exp (c(\beta) \sqrt{\log (k)\log\log(2k)})$.% and $c(\beta)>0$ is a constant.
\end{enumerate}
\end{corollary}

Let us compare this with known results, our novelty is in the lower bounds and
so we only compare these. As far as the authors are aware, the best known lower bounds
for $M(n,k,p)$ come from error-correcting codes and apply to the cases when
$p=\frac{1}{q}$ for a prime power $q$. The most important case for applications is $p=\frac{1}{2}$.
In this case it was known using BCH codes (\cite{MS77},\cite[Chapter 15]{AS00})
that $M(n,k,\frac{1}{2})\ge \left(\frac{c_1}{n}\right)^{\lfloor k/2 \rfloor}$ and also using the
Gilbert-Varshamov bound \cite{MS77} that $M(n,k,\frac{1}{2})\ge c_2\left(\frac{c_3(k-1)}{n}\right)^{k-1}$
for some constants $c_1,c_2,c_3>0$. In both cases our bound improves on the known asymptotic results
for $k=o(n)$, but still growing to infinity with $n$.

Other cases where lower bounds were known are the cases in which
$p=\frac{1}{q}\neq \frac{1}{2}$ for a prime power $q$. In these cases much less is known
and even for the case of constant $k$ and $p$, the best results we are aware of are
of the form $M(n,k,p)\ge n^{-\alpha(k,p)(1+o(1))}$
where, except for a few cases,
$\alpha(k,p)$ is strictly larger than $\lfloor\frac{k}{2}\rfloor$ (see \cite{DY04}
for a survey of such results). For example, in the case $p=\frac{1}{3}$ and constant $k\ge 7$ it appears that the best known
asymptotic result in $n$ was $M(n,k,\frac{1}{3})\ge cn^{-\lceil 2(k-1)/3\rceil}$. Our results show that the correct asymptotic behavior
for constant $k$ and $p$ is $M(n,k,p)=\Theta(n^{-\lfloor k/2\rfloor})$.% (the case of odd $k$
%follows from the case of even $k$, see \eqref{odd_case_equality} below).

Here is a high-level description of the proof of Theorem~\ref{main_theorem}.
We start by employing linear programming duality as in \cite{BGGP}.
This duality shows that $M(n,k,p)$ is the minimum of the expectation $\E f(X)$,
where $X\sim\Bin(n,p)$, over all polynomials $f$ from a certain class (see \eqref{one_d_max_dual_discrete_problem}). A similar duality shows that $\tilde{M}(n,k,p)$ is the minimum of the expectation $\E g(X)$, where $X\sim\Bin(n,p)$, over all polynomials $g$ from a strictly smaller class than that of the first minimization problem (see \eqref{one_d_max_dual_continuous_problem}). This latter minimization problem is exactly solvable using the methods of the classical moment problem. 
%Following \cite{BGGP}, we note that $\tilde{M}(n,k,p)$ is exactly computable using the methods of the classical moment problem. 
We continue by associating to each polynomial $f$ from the class of the first problem, a polynomial $g$ from the class of the second problem, obtained by perturbing the roots of $f$. 
%We continue by estimating $\E f(X)$ using $\E g(X)$,
%where $g$ is polynomial obtained by perturbing the roots of $f$ in a certain way.
It thus follows that
\begin{equation*}
\frac{\tilde{M}(n,k,p)}{M(n,k,p)}\le \max \left(\frac {\E g(X)}{\E f(X)}\right)
\end{equation*}
where the maximum ranges over all polynomials $f$ from the class of the first problem and $g$ is the polynomial associated to $f$.
%We thus wish to bound the ratio $\E g(X) / \E f(X)$.
A bound for the RHS of the above inequality which yields Theorem~\ref{main_theorem} is then given by Theorem~\ref{thm: ratio between
polynomials}.
Our methods can be used to bound the `change' in expectation for
other distributions as well (see Section~\ref{sec: pert root} for
more details). 
Such an argument
%We note that the methods used in proving the theorem 
can be applied to other problems where there is a classical moment problem analogue to
discrete problems. It thus seems that Theorem~\ref{thm:  ratio
between polynomials} and its proof might be of independent interest.

\paragraph{Outline}
Section~\ref{problem_and_dual_section} gives a more precise description of the question we consider, 
and explains some useful facts about it,
including the use of linear programming duality.
Section~\ref{relaxed_problem_section} describes the relaxed version of the problem with emphasis on its similarity to the original problem. Our main result is explained in Section~\ref{main_results_section} where the result on polynomials and the reduction between them are described. We also do the computations needed to obtain Corollary~\ref{intro_corollary} there. Finally, Section~\ref{sec: pert root} proves the result on polynomials. Some open problems are presented in Section~\ref{open_problems_section}. For completeness, the appendix gives short proofs for the results of \cite{BGGP} that we use.
\end{section}

\begin{section}{The problem and its dual} \label{problem_and_dual_section}
In this section we introduce notation for our problem and present
it in more precise terms. We then continue to describe the dual of
the problem, on which we shall concentrate in the following
sections. Let $\A(n,k,p)$ be the set of all probability
distributions on $\{0,1\}^n$ which are $k$-wise independent and
have identical marginals $p$. In other words, the distribution of
$(X_1,\ldots, X_n)$ belongs to $\A(n,k,p)$ if $\P(\forall \ i\in S
\ X_i=1)=p^{|S|}$ for all $S$ with $|S|\le k$.
Thinking of $\A(n,k,p)$ as a body in $\R^{2^n}$,
it is convex. Hence, bounding the probability of the event $\AND = \{
\forall \ 1\le i\le n \ X_i=1 \}$ under all probability
distributions in $\A(n,k,p)$ is the same as finding
\begin{align}
M(n,k,p) &= \max_{\Q\in\A(n,k,p)} \Q(\AND) \label{one_d_non_symmetric_max_problem}\\
m(n,k,p) &= \min_{\Q\in\A(n,k,p)} \Q(\AND) \label{one_d_non_symmetric_min_problem}.
\end{align}

In \cite{BGGP} it was shown that for many choices
of the parameters $n,k,p$ we have $m(n,k,p)=0$, making the bound
in this direction perhaps less useful. In this work we concentrate
on estimating $M$.
%For odd $k$ it is shown in \cite{BGGP} that
%\begin{equation}\label{odd_case_equality}
%M(n,k,p) = pM(n-1,k-1,p) \qquad \text{($k$ odd)},
%\end{equation}
%hence it is enough to consider the even case.

A simplification of problems \eqref{one_d_non_symmetric_max_problem} and \eqref{one_d_non_symmetric_min_problem} is possible:
Define the set $$ \A^s(n,k,p) \subseteq \A(n,k,p)$$ to be the set of symmetric distributions
in $\A(n,k,p)$; that is, the joint distribution of $(X_1,\ldots, X_n)$ is in $\A^s(n,k,p)$
if it is in $\A(n,k,p)$ and $(X_1,\ldots, X_n)$ are exchangeable.
Since the $\AND$ event is
symmetric, one can show that
\begin{align}
M(n,k,p)&=\max_{\Q\in\A^s(n,k,p)} \Q(\AND) \label{one_d_max_discrete_problem}\\
m(n,k,p)&=\min_{\Q\in\A^s(n,k,p)} \Q(\AND) \label{one_d_min_discrete_problem}.
\end{align}
Note further that a distribution in $\A^s(n,k,p)$ may be identified with the integer random
variable $S$ which counts the number of bits that are $1$.
 Note that such an
 $S$ has the following properties:
\begin{enumerate}
\item[(I)] $S$ is supported on $\{0,1,\ldots, n\}$.
\item[(II)] $\E S^i = \E X^i$ for $X\sim \Bin(n,p)$ and $1\le i\le k$.
\end{enumerate}
The converse also holds (see \cite{BGGP}); that is,
\begin{lemma}
\label{lem: S and As are same}
For each random variable $S$ satisfying $(I)$ and $(II)$,
there exists $\Q\in\A^s(n,k,p)$ such that $S$ has the distribution of the number of bits which are $1$ under $\Q$.
\end{lemma}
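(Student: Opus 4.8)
The plan is to write down an explicit member of $\A^s(n,k,p)$. Given a random variable $S$ satisfying (I) and (II), define a distribution $\Q$ on $\{0,1\}^n$ by the following two-step sampling procedure: first draw $s$ according to the law of $S$; then pick a uniformly random subset $U\subseteq\{1,\ldots,n\}$ of size $s$ and set $X_i=1$ if and only if $i\in U$. By construction $\Q$ is exchangeable and the number of bits equal to $1$ under $\Q$ has exactly the law of $S$, so the only thing left to check is that $\Q\in\A(n,k,p)$, i.e.\ that $\Q(\forall\,i\in T:X_i=1)=p^{|T|}$ for every $T$ with $|T|=j\le k$.

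First I would reduce this to a falling-factorial moment identity. Conditioning on $S=s$, the probability that a fixed $j$-element set is contained in the uniform random $s$-subset $U$ is $\binom{n-j}{s-j}/\binom{n}{s}=(s)_j/(n)_j$, where $(m)_j:=m(m-1)\cdots(m-j+1)$ denotes the falling factorial. Averaging over $s$, and noting that this quantity depends on $T$ only through $j$, gives $\Q(\forall\,i\in T:X_i=1)=\E[(S)_j]/(n)_j$. (Equivalently, one can observe that $\sum_{|T|=j}\prod_{i\in T}X_i$ is the $j$-th elementary symmetric polynomial of the bits, which for $0/1$-valued variables equals $\binom{S}{j}$, and then use exchangeability to conclude that each of the $\binom{n}{j}$ summands has expectation $\E\bigl[\binom{S}{j}\bigr]/\binom{n}{j}=\E[(S)_j]/(n)_j$.)

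Next, since $(S)_j$ is a polynomial in $S$ of degree $j\le k$ (with vanishing constant term when $j\ge 1$), property (II) yields $\E[(S)_j]=\E[(X)_j]$ for $X\sim\Bin(n,p)$. Finally, the falling-factorial moments of the binomial are $\E[(X)_j]=(n)_j\,p^j$; the cleanest way to see this is to write $X=\sum_{i=1}^n\xi_i$ with i.i.d.\ Bernoulli($p$) variables $\xi_i$, so that $(X)_j$ counts the ordered $j$-tuples of distinct indices whose $\xi$'s all equal $1$, and there are $(n)_j$ such tuples each contributing $p^j$. Combining the last three displays, $\Q(\forall\,i\in T:X_i=1)=(n)_j\,p^j/(n)_j=p^j$, as required.

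Every step here is an elementary identity, so I do not expect a genuine obstacle; the one point that deserves a moment's care is bookkeeping the degrees — property (II) supplies the moments of orders $1$ through $k$, which is precisely what is needed to match $\E[(S)_j]$ with $\E[(X)_j]$ for all $j\le k$ (for $j=0$ both sides equal $1$ trivially).
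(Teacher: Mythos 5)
Your construction and verification are correct: the mixture of uniform distributions on fixed-weight layers is exchangeable, the containment probability $\binom{n-j}{s-j}/\binom{n}{s}=(s)_j/(n)_j$ is valid for all integers $0\le s\le n$ (including $s<j$, where both sides vanish), and matching $\E[(S)_j]$ with $\E[(X)_j]=(n)_j p^j$ uses exactly the moments of orders $1$ through $k$ supplied by (II). The paper itself gives no proof of this lemma, deferring to \cite{BGGP}; your argument is the standard symmetrization proof one would expect there, so there is nothing to flag.
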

Relying on Lemma~\ref{lem: S and As are same}, we shall henceforth identify $\A^s(n,k,p)$
with distributions $S$ satisfying $(I)$ and $(II)$ above. There is a short argument
given below showing that the distribution $S$ achieving the maximum in \eqref{one_d_max_discrete_problem}
is unique. Similar arguments are used in \cite{KN77}.

We can now think of problem $\eqref{one_d_max_discrete_problem}$ as a linear programming problem in $n+1$ variables, namely, find the maximum of $\P(S=n)$ under the constraints $\P(S=i)\ge0$ for $0\le i\le n$, $\sum_{i=0}^n \P(S=i)=1$ and the linear conditions on
$\P(S=i)$ given by (II) above.
We shall estimate $M(n,k,p)$ using
the dual linear programming problem \cite{BGGP}:
\begin{equation} \label{one_d_max_dual_discrete_problem}
M(n,k,p) = \min_{P\in \poly_k^d} \E_{\Bin(n,p)} P(X) ,
\end{equation}
where $\poly_k^d$ is the collection of polynomials $P:\R\to\R$
of degree at most $k$
satisfying $P(i)\ge 0$ for $i\in\{0,1,\ldots,n-1\}$ and $P(n)\ge 1$ (the $d$ in the notation stands for discrete).
 We shall bound
$M(n,k,p)$ from below by showing that for each $P\in\poly_k^d$, the above expectation is not too small.

Note that finding an optimal polynomial for the above problem gives more information
than just $M(n,k,p)$. By the theorem of complementary slackness of linear programming, if $Z$ is the set of zeros of an optimal polynomial in \eqref{one_d_max_dual_discrete_problem} then the support of the optimal distribution in \eqref{one_d_max_discrete_problem} is contained in $Z\cup\{n\}$.
This can also be seen
probabilistically since if $P$ is an optimal polynomial,
then $\E P(S)=M(n,k,p)$ for any $S\in\A^s(n,k,p)$
(since $P$ is of degree at most $k$). But for any $P\in\poly_k^d$ we have $\E P(S)\ge \P(S=n)$,
hence $\P(S=n)=M(n,k,p)$ only when $P(n)=1$ and all the support of $S$ besides $\{n\}$ is
contained in the zero set of $P$. Of course once the support of the optimal $S$
(or the zero set of an optimal polynomial) is known, the exact probabilities of $S$ can be found by solving a system of linear equations.
This system always has a unique solution (it has a Van der Monde coefficient matrix), which also proves the uniqueness of the distribution of $S$.

Pr\'ekopa in his work (\cite{P88}, see also \cite{BP89})
considers in more generality the problem of estimating $\P(S=n)$ for the class of random variables
$S$ with given first $k$ moments (not necessarily those of the Binomial).
He does not use probabilistic language and instead writes his work in linear programming terminology. Adapting one of his results to our situation,
it reads
\begin{theorem} \label{prekopa_theorem}(Pr\'ekopa \cite[Theorem 9]{P88})
There exists an optimizing polynomial $P$ for \eqref{one_d_max_dual_discrete_problem} of the following form. $P(n)=1$ and $P$ has $k$ simple roots $z_1<z_2<\cdots<z_k$, all contained in $\{0,1,\ldots, n-1\}$. Furthermore
\begin{enumerate}
\item For even $k$, the roots come in pairs $z_{i+1} = z_i + 1$ for odd $1\le i\le k-1$.
\item For odd $k$, $z_1=0$ and the rest of the roots come in pairs $z_{i+1} = z_i + 1$ for even $2\le i\le k-1$.
\end{enumerate}
\end{theorem}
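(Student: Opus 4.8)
The plan is to recognize the dual program \eqref{one_d_max_dual_discrete_problem} as a finite‑dimensional linear program and to read the claimed shape of $P$ off the geometry of an optimal vertex. Here the decision variable ranges over the $(k+1)$‑dimensional space of real polynomials of degree at most $k$, the feasible set $\poly_k^d$ is the polyhedron cut out by the $n+1$ linear inequalities $P(i)\ge 0$ for $i\in\{0,\dots,n-1\}$ and $P(n)\ge 1$, and the objective $\E_{\Bin(n,p)}P(X)=\sum_{i=0}^n\binom{n}{i}p^i(1-p)^{n-i}P(i)$ is linear. One first checks that the program is well posed: it is feasible, e.g. $P(x)=\prod_{i=0}^{k-1}\frac{x-i}{n-i}$ works (throughout $k\le n$, which is implicit in the statement since it asks for $k$ distinct roots in $\{0,\dots,n-1\}$, and indeed $k<n$ in the regime of interest); its objective is bounded below by $0$ on $\poly_k^d$ because every feasible $P$ is nonnegative on the support $\{0,\dots,n\}$ of $\Bin(n,p)$; and $\poly_k^d$ is pointed, since a line inside it would have a direction $Q$ with $Q(i)=0$ for all $i\in\{0,\dots,n\}$, impossible for a nonzero $Q$ of degree $\le k\le n$. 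Hence the minimum is attained at a vertex $P^*$.

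Next I would unpack what being a vertex forces. Since any $k+1$ of the evaluation functionals $P\mapsto P(j)$, $j\in\{0,\dots,n\}$, are linearly independent on the space of degree‑$\le k$ polynomials, a vertex is exactly a feasible $P^*$ at which at least $k+1$ of the constraints are tight, i.e. at least $k+1$ of the points $0,\dots,n$ attain their bound. A nonzero polynomial of degree $\le k$ has at most $k$ roots, so $P^*$ can be tight at no more than $k$ of the points $0,\dots,n-1$; the only way to reach $k+1$ tight constraints is therefore $P^*(n)=1$ together with $P^*$ vanishing at exactly $k$ distinct integers $z_1<\dots<z_k$ in $\{0,\dots,n-1\}$. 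Consequently $\deg P^*=k$, these $z_j$ are all of its (simple) roots, and $P^*(x)=c\prod_{j=1}^k(x-z_j)$ with $c=\big(\prod_{j=1}^k(n-z_j)\big)^{-1}>0$ since $n>z_j$ for every $j$.

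The last and most delicate step is the sign analysis that locates the $z_j$. For a non‑root integer $i\in\{0,\dots,n-1\}$, feasibility forces $P^*(i)>0$, which by the factorization and $c>0$ means that $\#\{j:z_j>i\}$ must be even. Scanning the integers from the right: if $k$ is even, an integer strictly between $z_{k-1}$ and $z_k$ would give the odd count $1$, so $z_k=z_{k-1}+1$; one between $z_{k-3}$ and $z_{k-2}$ gives count $3$, so $z_{k-2}=z_{k-3}+1$; iterating yields exactly $z_{i+1}=z_i+1$ for odd $i$, while the leftmost region $i<z_1$ has the automatically even count $k$ and imposes nothing. If $k$ is odd, the same iteration pairs up $z_2,\dots,z_k$ as $z_{i+1}=z_i+1$ for even $i$, but now $i<z_1$ has odd count $k$, so that region may contain no point of $\{0,\dots,n-1\}$, forcing $z_1=0$. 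In either case $P^*$ has precisely the asserted form.

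The main obstacle is not any single computation but keeping the linear‑programming bookkeeping honest: one must carefully rule out the degenerate alternatives at a vertex — $P^*(n)>1$, $P^*\equiv 0$, repeated roots, or roots outside $\{0,\dots,n-1\}$ — all of which are killed by the ``at most $k$ roots'' bound, and one must verify that $\poly_k^d$ is pointed so that a vertex optimum exists at all. An alternative route, closer to Pr\'ekopa's original argument and to the theory of Tchebycheff systems (cf. \cite{KN77}), is to take an optimal distribution in \eqref{one_d_max_discrete_problem}, interpolate a degree‑$\le k$ polynomial through its complementary‑slackness zero set, and then run the same parity argument to pin down the roots; this reaches the same conclusion.
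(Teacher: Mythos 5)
Your argument is correct. Note that the paper itself offers no proof of this statement: it is quoted as a citation of Pr\'ekopa \cite[Theorem 9]{P88} (with a pointer to \cite{KN77}), so what you have written is a self-contained proof of a black-boxed result rather than a variant of an argument in the paper. Your route is the natural one and all the steps check out: the program \eqref{one_d_max_dual_discrete_problem} is a bounded, feasible LP over the line-free polyhedron $\poly_k^d\subset\R^{k+1}$, so an optimal vertex exists; Vandermonde independence of the evaluation functionals converts ``vertex'' into ``at least $k+1$ tight constraints''; the at-most-$k$-roots bound then forces $P^*(n)=1$ together with exactly $k$ simple integer roots and a positive leading coefficient; and the parity-of-sign-changes scan correctly yields the pairing $z_{i+1}=z_i+1$ (and $z_1=0$ for odd $k$, since the region left of $z_1$ would otherwise contain a point where $P^*$ is negative). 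Your closing remark is also accurate: Pr\'ekopa's own derivation, and the treatment in \cite{KN77}, proceed via the primal problem \eqref{one_d_max_discrete_problem} and complementary slackness in the spirit of Tchebycheff systems, but the two routes are dual formulations of the same counting argument. One small caveat worth making explicit if this were to be inserted as a proof: the hypothesis $k\le n$ must be assumed (as it implicitly is throughout the paper, where $k\le c_1(np(1-p)-1)$), since otherwise the vertex analysis and even the statement itself break down.
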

This result is also essentially contained in \cite[Chap. VIII, sec. 3]{KN77}. 
Figures~\ref{discrete_poly_20_6_05} and \ref{discrete_poly_20_8_03} below 
present such optimizing polynomials for some choices of the parameters.
The theorem is not so surprising when one recalls that we are trying to minimize the expectation
of $P$ under the positivity constraints of the class $\poly_k^d$. The theorem is valid in the
generality of Pr\'ekopa's work, i.e., the first $k$ moments of $S$ are given but they do not
necessarily equal those of a Binomial random variable.

We remark that the case in which there is more than one optimizing polynomial
is the case in which some degeneracy occurs in the problem, allowing the optimal
distribution for \eqref{one_d_max_discrete_problem} to be supported on less than $k+1$ points.
\end{section}

\begin{section}{The relaxed problem}\label{relaxed_problem_section}
As explained in the introduction, in \cite{BGGP} an upper bound for $M(n,k,p)$ was given.
The bound was proven by considering a relaxed version of problems \eqref{one_d_max_discrete_problem}
and \eqref{one_d_max_dual_discrete_problem}. In this section we describe this relaxed version
(doing so, we follow the ideas presented in \cite{BGGP}).
Problem \eqref{one_d_max_discrete_problem} is replaced by
\begin{equation}
\tilde{M}(n,k,p)=\max_{S\in\A^c(n,k,p)} \P(S=n), \label{one_d_max_continuous_problem}
\end{equation}
where $\A^c(n,k,p)$ (here the $c$ stands for continuous) is the set of all real random variables $S$ satisfying
\begin{enumerate}
\item[(I')] $S$ is supported on $[0,n]$.
\item[(II')] $\E S^i = \E X^i$ for $X\sim \Bin(n,p)$ and $1\le i\le k$.
\end{enumerate}
Comparing conditions (I), (II) above to conditions (I'), (II') here we see that the only difference between the original and relaxed problems is that in the relaxed problem $S$ may take non-integer values between $0$ and $n$. Of course, inequality \eqref{tilde_M_bounds_M} follows trivially. %we have the trivial inequality $M(n,k,p)\le \tilde{M}(n,k,p)$.
In \cite{BGGP}, the exact value of $\tilde{M}$ was found, giving the formula \eqref{value_of_tilde_M} for even $k$. 
%it was shown that for even $k$
%\begin{equation} \label{value_of_tilde_M}
%\tilde{M}(n,k,p)=\frac{p^n}{\P(\Bin(n,1-p)\le \frac{k}{2})} .
%\end{equation}
The reason that $\tilde{M}$ is easier to handle than $M$ is that the problem $\eqref{one_d_max_continuous_problem}$ is a special case of the Classical Moment Problem. Such problems have been solved, for example in the classical books \cite[Theorem 2.5.2]{Ak65}, \cite[Chap. III, sec. 3.2]{KN77}, and a great deal of theory has been developed around them.

We now consider the dual problem to \eqref{one_d_max_continuous_problem}, which is
\begin{equation} \label{one_d_max_dual_continuous_problem}
\tilde{M}(n,k,p) = \min_{P\in \poly_k^c} \E_{\Bin(n,p)} P(X) ,
\end{equation}
where $\poly_k^c$ is the collection of polynomials $P:\R\to\R$
of degree at most $k$ satisfying $P(x)\ge 0$ for $x\in[0,n)$ and $P(n)\ge 1$ (the $c$ stands for continuous).
The optimizing polynomial is explicitly given in \cite{Ak65},
it equals $1$ at $n$ and for even $k$ it has $\frac{k}{2}$ double roots in $[0,n)$ (for odd $k$ it has one root at $0$ and $\frac{k-1}{2}$ double roots in $(0,n)$). The location of the roots is given in terms of Krawtchouk polynomials,
the orthogonal polynomials of the Binomial distribution.
In Figures~\ref{continuous_poly_20_6_05} and \ref{continuous_poly_20_8_03} we
have drawn the optimizing polynomials for some specific parameters. Refer to \cite{BGGP}
for more details on the optimizing polynomials.
\begin{figure}[ht]
\begin{minipage}[t]{0.45\textwidth}
\includegraphics[width=\textwidth,clip=true]{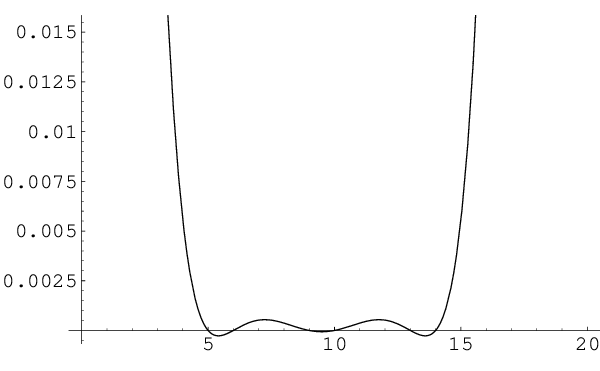}
\caption{Optimizing polynomial in \eqref{one_d_max_dual_discrete_problem} for $n=20,k=6,p=\frac{1}{2}$.}\label{discrete_poly_20_6_05}
\end{minipage}
\qquad
\begin{minipage}[t]{0.45\textwidth}
\includegraphics[width=\textwidth,clip=true]{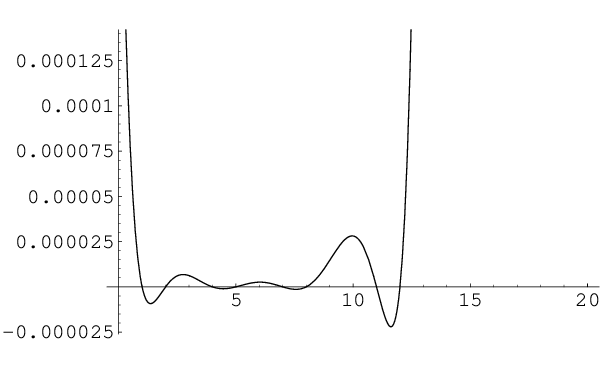}
\caption{Optimizing polynomial in \eqref{one_d_max_dual_discrete_problem} for $n=20,k=8,p=\frac{3}{10}$.}\label{discrete_poly_20_8_03}
\end{minipage}

\begin{minipage}[t]{0.45\textwidth}
\includegraphics[width=\textwidth,clip=true]{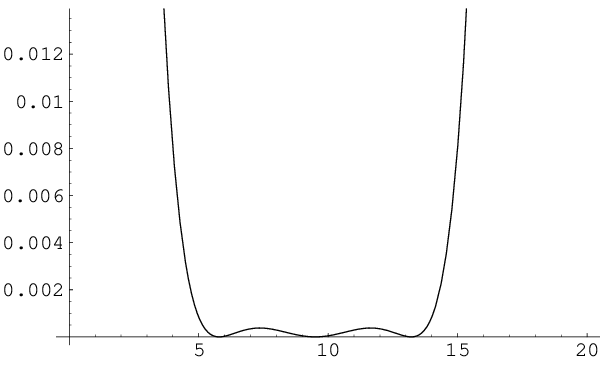}
\caption{Optimizing polynomial in \eqref{one_d_max_dual_continuous_problem} for $n=20,k=6,p=\frac{1}{2}$.}\label{continuous_poly_20_6_05}
\end{minipage}
\qquad
\begin{minipage}[t]{0.45\textwidth}
\includegraphics[width=\textwidth,clip=true]{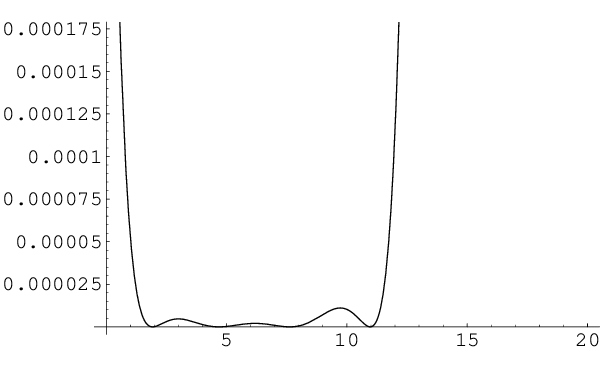}
\caption{Optimizing polynomial in \eqref{one_d_max_dual_continuous_problem} for $n=20,k=8,p=\frac{3}{10}$.}\label{continuous_poly_20_8_03}
\end{minipage}
\end{figure}

It seems worth mentioning that for even $k$ there is another problem
which is equivalent to the relaxed dual problem \eqref{one_d_max_dual_continuous_problem}.
 This other problem has been used by some authors to obtain similar upper bounds, sometimes without noting the equivalence to
 \eqref{one_d_max_dual_continuous_problem}. This equivalence is also fundamental in the analysis of the
 Classical Moment Problem. The equivalent problem for even $k$ is
\begin{equation}\label{one_d_max_dual_continuous_problem_sq_form}
\tilde{M}(n,k,p) = \min_{P\in \poly_k^2} \E_{\Bin(n,p)} P(X) ,
\end{equation}
where $\poly_k^2$ is the collection of polynomials $P:\R\to\R$ of the form $P=R^2$ where $R$ is a polynomial of degree at most $\frac{k}{2}$ satisfying $|R(n)|\ge 1$. It is clear that $\poly_k^2\subseteq \poly_k^c$ but in fact they are equal. This follows immediately from the Markov-Lukacs theorem (see for example \cite[Chap. III, thm. 2.2]{KN77})
\begin{theorem}(Markov-Lukacs) A polynomial $P$ of even degree is non-negative on $[a,b]$ iff it is of the form
\begin{equation}
P(x) = R^2(x) + (x-a)(b-x)Q^2(x)
\end{equation}
for some polynomials $Q$ and $R$.
\end{theorem}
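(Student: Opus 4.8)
The ``if'' direction is immediate: on $[a,b]$ one has $R^2(x)\ge 0$, $(x-a)(b-x)\ge 0$ and $Q^2(x)\ge 0$, hence $R^2(x)+(x-a)(b-x)Q^2(x)\ge 0$ there.

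For the converse I would argue as follows. Write $w(x)=(x-a)(b-x)$ and let $\Sigma$ be the set of real polynomials of the form $R^2+wQ^2$; the goal is to show that every $P$ of even degree with $P\ge 0$ on $[a,b]$ lies in $\Sigma$. \emph{First}, note that $\Sigma$ is closed under multiplication, by the two-squares identity
\begin{equation*}
(R_1^2+wQ_1^2)(R_2^2+wQ_2^2)=(R_1R_2-wQ_1Q_2)^2+w\,(R_1Q_2+R_2Q_1)^2,
\end{equation*}
which is a genuine polynomial identity because the factors of $w$ always pair up; and since $1=\tfrac{4}{(b-a)^2}\big(x-\tfrac{a+b}{2}\big)^2+\tfrac{4}{(b-a)^2}\,w\in\Sigma$, the class $\Sigma$ is also closed under multiplication by nonnegative scalars. \emph{Second}, I would factor $P$ over $\R$ as $c_0\prod_j(x-r_j)^{m_j}\prod_k\big((x-u_k)^2+v_k^2\big)^{\mu_k}$ and treat the factors as ``atoms''. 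Each real root $r_j\in(a,b)$ must have even multiplicity, for otherwise $P$ changes sign at an interior point of $[a,b]$; it contributes a perfect square, which lies in $\Sigma$ with $Q=0$. Each irreducible quadratic $(x-u_k)^2+v_k^2$ also lies in $\Sigma$: seeking it as $R^2+wQ^2$ with $\deg R\le 1$ and $Q$ constant and matching coefficients, one reduces to a quadratic equation in $q^2$ whose constant term is $-v_k^2<0$, so it has a positive root; its $\mu_k$-th power remains in $\Sigma$ by the first step. What then remains is $\abs{c_0}$ times a product $L$ of linear factors, each either $(x-r)$ with $r\le a$ or $(r-x)$ with $r\ge b$, the sign of $c_0$ having been absorbed by noting that positivity of $P$ on $(a,b)$ forces $\operatorname{sign}(c_0)\,(-1)^{P_+}=+1$, where $P_+$ is the number of roots of $P$, counted with multiplicity, that are $\ge b$. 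Every factor of $L$ is nonnegative on $[a,b]$, and $\deg L$ is even (it equals $\deg P$ minus an even number), so the factors of $L$ can be grouped in pairs into quadratics, each nonnegative on $[a,b]$. \emph{Third}, I would multiply the pieces back together, using closure under multiplication, to conclude $P\in\Sigma$.

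The crux --- and the step I expect to demand the most care --- is the lemma that \emph{every} polynomial $p$ of degree $2$ with $p\ge 0$ on $[a,b]$ lies in $\Sigma$ (this is what the paired quadratic factors of $L$ need, after also absorbing the scalar $\abs{c_0}$). The plan is to study $g_t\eqdef p-tw$: this is a quadratic whose discriminant $D(t)$ is itself quadratic in $t$ with positive leading coefficient $(a-b)^2$, satisfies $D(0)=\operatorname{disc}(p)$, and (after a short computation) has its own discriminant equal to $16\,p(a)p(b)\ge 0$, so $D$ has real roots; one then checks that the sum and the product of these roots are both nonnegative --- using $p(a),p(b)\ge 0$ together with a short case analysis according to whether $p$ has real or non-real roots and, when real, on which side of $[a,b]$ they lie --- hence $D$ has a root $t_0\ge 0$. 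At such a $t_0$ the quadratic $g_{t_0}$ has a double root, and one checks its leading coefficient is nonnegative; therefore $g_{t_0}=R^2$ for some polynomial $R$ with $\deg R\le 1$, and $p=R^2+w\,(\sqrt{t_0}\,)^2\in\Sigma$. The fussy sub-cases are when $p$ has two distinct real roots both outside $(a,b)$, and when $p$ is a downward parabola vanishing outside $[a,b]$ (for instance $p=w$ itself): there one must confirm that the chosen $t_0$ is admissible and that $g_{t_0}$ does not come out with negative leading coefficient, which is handled by the sign bookkeeping just described, or, when $g_t$ degenerates to a linear function at the relevant value of $t$, by passing slightly beyond that value. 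With this lemma established, the three steps give $P\in\Sigma$, i.e.\ $P$ has the asserted form $R^2+(x-a)(b-x)Q^2$.
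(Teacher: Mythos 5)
The paper does not prove this statement: it quotes it as the classical Markov--Lukacs theorem, with a pointer to Krein--Nudelman, and uses it only as a black box. So there is no in-paper argument to compare yours against; what you have written is a self-contained proof along standard lines, and its architecture is sound. The ``if'' direction, the two-squares identity showing $\Sigma$ is closed under products, the constant $1\in\Sigma$, the factorization of $P$ into interior double roots, irreducible quadratics, and paired exterior linear factors, the parity/sign bookkeeping for $c_0$, and the reduction of everything to the degree-two lemma are all correct.

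One sub-claim in your crux lemma is false as literally stated: the sum of the roots of $D(t)$ need not be nonnegative. A computation gives $\text{sum}=\tfrac{2(p(a)+p(b))}{(a-b)^2}-2\alpha$ where $\alpha$ is the leading coefficient of $p$; for $p=(x-\tfrac12)^2$ on $[0,1]$ this equals $-1$. What is true, and what you actually need, is that the \emph{larger} root of $D$ is always $\ge 0$, and this follows from a case split you almost state: if $\operatorname{disc}(p)\le 0$ the product of the roots, $\operatorname{disc}(p)/(a-b)^2$, is $\le 0$, so one root is $\ge 0$; if $\operatorname{disc}(p)>0$ and $\alpha>0$, then $p$ has two real roots on one side of $[a,b]$, whence $p(a)+p(b)\ge\alpha(a-b)^2$ and the sum (and product) of the roots of $D$ are nonnegative; if $\alpha\le0$ the sum is nonnegative automatically. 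For the leading-coefficient worry, the clean argument is: if $t_-<t_+$ are the roots of $D$ and $\alpha+t_+<0$, then for $t\in(t_-,t_+)$ the quadratic $g_t$ has negative discriminant and negative leading coefficient, hence is strictly negative everywhere, contradicting $g_t(a)=p(a)\ge0$; a double root of $D$ with $\alpha+t_0<0$ would force $g_{t_0}(a)=p(a)=0$ with double root at $a$ and likewise at $b$, which is impossible. Your suggestion to ``pass slightly beyond'' the degenerate value is not meaningful (off a root of $D$, $g_t$ is no longer a square), but the degenerate case needs no fix: when $\alpha+t_0=0$ and $D(t_0)=0$, the linear coefficient of $g_{t_0}$ also vanishes, so $g_{t_0}$ is the constant $p(a)\ge0$, already a square. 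With these repairs the proof is complete.
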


\end{section}

\begin{section}{Proof of main result}\label{main_results_section}

In this section we show how to reduce our main result, Theorem~\ref{main_theorem}, to a result about polynomials. We also give the estimate on $\tilde{M}(n,k,p)$ required to deduce Corollary~\ref{intro_corollary} from Corollary~\ref{main_result_corollary}.
%In this section we prove our main result, Theorem~\ref{main_theorem}. 
%Recall that $M(n,k,p)$
%stands for the maximal probability of the $\AND$ event under a
%$k$-wise independent distribution on $n$ bits with marginal
%probability $p$. Recall also that $\tilde{M}(n,k,p)$ is given in
%\eqref{value_of_tilde_M} and that $M(n,k,p)\le\tilde{M}(n,k,p)$.
%Our main result is a lower bound for $M(n,k,p)$ matching the bound
%given by $\tilde{M}(n,k,p)$ up to multiplicative factors of lower
%order, in a large regime of the parameters.

Theorem \ref{main_theorem} is proved using the following general idea. 
Consider any polynomial $P\in\poly_k^d$ of the form given in Pr\'ekopa's
Theorem \ref{prekopa_theorem}. Change the location of its roots
slightly to make each pair of adjacent roots into one double root.
The new perturbed polynomial $\tilde{P}$ is in $\poly_k^c$. Show
that the expectation under the $\Bin(n,p)$ distribution of
$\tilde{P}$ is not much higher than that of $P$. Deduce that the
expectation of the optimal polynomial in
\eqref{one_d_max_dual_discrete_problem} is not much lower than the
expectation of the optimal polynomial in
\eqref{one_d_max_dual_continuous_problem}.

The actual proof that the two expectations are close is somewhat
complicated. A key ingredient is the use of discrete Chebyshev
polynomials to bound the ratio of the value of $P$ and $\tilde{P}$
at certain points.
%NEW
The Chebyshev polynomials were previously used in a similar context;
see, for example, \cite{HLL,S99}.
%END NEW

The result we need about polynomials is the following. Let $k$ be even and fix two polynomials
\begin{eqnarray}
f(x) = \prod_{i=1}^{k/2} (x-a_i) (x-a_i-1) \quad \textrm{ and }
\quad g(x) = \prod_{i=1}^{k/2} (x-a_i)^2 \label{f_and_g_def} ,
\end{eqnarray}
with all $a_i\in\{0,1,\ldots, n-1\}$ and such that $a_i\neq a_j$ and $a_i\neq a_j+1$
for $i\neq j$.

For a polynomial $\vphi$ we denote $\E_{n,p}[\vphi] = \E \br{ \vphi(X) }$ where
$X \sim \Bin(n,p)$ has Binomial distribution with parameters $n$ and $p$.

\begin{theorem} \label{thm:  ratio between polynomials}
There exist constants $c_1,c_2,c_3 > 0$ such that the following holds.
Let $n \in \N$,  $k \in \N$ even and $0 < p < 1$.
Let $N = np(1-p) - 1$.
Assume $k \leq c_1 \cdot N$.
Then,
$$ \E_{n,p} [g] \leq c_3 k \cdot
\exp \sr{ c_2 \cdot \frac{k}{V(N/k)}}
 \E_{n,p}[f] , $$
where $V(a)=\exp \Big( \sqrt{\log(a)\log\log(a)} \Big)$.
\end{theorem}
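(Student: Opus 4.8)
The plan is to compare $\E_{n,p}[g]$ and $\E_{n,p}[f]$ termwise by writing both expectations as positive sums over $j \in \{0,1,\dots,n\}$ of $\P(X=j)$ times $f(j)$ (resp. $g(j)$), and controlling the ratio $g(j)/f(j)$ at the points where it could be large. Fix $j$. Then
\[
\frac{g(j)}{f(j)} = \prod_{i=1}^{k/2} \frac{(j-a_i)^2}{(j-a_i)(j-a_i-1)} = \prod_{i=1}^{k/2} \frac{j-a_i}{j-a_i-1},
\]
so one is led to estimate this product of ``shifted ratios.'' Since the $a_i$ are distinct integers in $\{0,\dots,n-1\}$ with the pairs $\{a_i,a_i+1\}$ disjoint, the values $j-a_i$ as $i$ ranges over $[k/2]$ are $k/2$ distinct integers; away from the zeros themselves the product telescopes into a manageable expression involving how the integers $\{j-a_i\}$ interleave with $\{j-a_i-1\}$. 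The first step is thus to get, for each relevant $j$, a clean bound of the form $g(j)/f(j) \le$ (something depending on the distance from $j$ to the nearest root cluster), and to observe $f(j),g(j)\ge 0$ at integer points outside the root set so that no cancellation helps the adversary. Roughly, when $j$ is at ``distance'' $t$ from the block of roots nearest to it, $g(j)/f(j)$ is of order $t$ (one extra linear factor survives the telescoping), but near the roots it could be as large as of order $k$.

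The heart of the argument is to show that the points $j$ where $g(j)/f(j)$ is large contribute only a small fraction of $\E_{n,p}[f]$. Here I would use Pr\'ekopa-type structure together with the behaviour of the relevant Chebyshev-like polynomials: the discrete Chebyshev (Hahn) polynomials orthogonal to the uniform measure, or the Krawtchouk polynomials, let one compare $f$ at neighbouring integers and show that $\E_{n,p}[f]$ is not concentrated too heavily on the immediate neighbourhood of the roots. Concretely, I would partition $\{0,\dots,n\}$ into the union of small windows around each pair $\{a_i,a_i+1\}$ and the complementary ``bulk.'' On the bulk, $g(j)/f(j)$ is bounded by the product of surviving linear factors, which — using that $X\sim\Bin(n,p)$ concentrates on a window of width $O(\sqrt{N})$ around $np$ and that there are only $k/2$ roots — can be bounded by something like $\exp\!\big(O(k)\cdot h(N/k)\big)$ for a slowly growing $h$; optimizing the window sizes is exactly where the function $V(a)=\exp(\sqrt{\log a \log\log a})$ emerges, as the solution of a trade-off between ``too close to a root'' (loss $\sim k$ per unit) and ``spread over many roots'' (loss multiplicative in the number of roots passed). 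On each small window the extra factor is at most $O(k)$, but the Binomial weight there is correspondingly small, absorbing that factor into the $c_3 k$ prefactor.

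The main obstacle, I expect, is the bulk estimate: bounding $\sum_j \P(X=j)\, g(j)$ by $\exp(c_2 k/V(N/k))\sum_j \P(X=j)\, f(j)$ requires showing that, although individual ratios $g(j)/f(j)$ can be polynomially large in the relevant range, the \emph{weighted} ratio is only subexponentially large in $k$. This is delicate because $f$ itself can be very small (near its roots) exactly where $\Bin(n,p)$ puts most of its mass, if the roots happen to sit near $np$ — which, by Pr\'ekopa's theorem and the Krawtchouk description, is typically the case. The right way through is to not bound $g(j)/f(j)$ pointwise but to reorganize the sum: pair up each ``bad'' point with nearby ``good'' points carrying comparable Binomial weight (the weights $\P(X=j)$ vary slowly on scale $\sqrt{N}\gg$ root spacing $1$), and use that $f$ restored to its neighbours recovers the lost linear factors. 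Making this pairing quantitative — essentially a discrete integration-by-parts or a comparison of $f$ and $g$ against a common reference polynomial tuned to the Binomial weight — while keeping the cumulative loss at $\exp(O(k/V(N/k)))$ is the crux; the choice $V(a)=\exp(\sqrt{\log a\log\log a})$ should fall out of balancing $k/\ell$ against $\ell\log\ell$ where $\ell\sim N/k$ is the per-root ``budget'' in the bulk.
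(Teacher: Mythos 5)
Your overall strategy --- termwise comparison of $\sum_j \Pr(X=j)g(j)$ with $\sum_j \Pr(X=j)f(j)$, a pointwise bound on $g(j)/f(j)$ where $f(j)\neq 0$, and a redistribution of the mass at the bad points to nearby points of comparable Binomial weight --- is indeed the strategy of the paper. But the proposal has a genuine gap, and it also misplaces the difficulty. Away from the zeros of $f$ there is no delicate ``bulk estimate'': since the pairs $\{a_i,a_i+1\}$ are disjoint, the product $\prod_i \frac{j-a_i}{j-a_i-1}$ is at most $\prod_{i\ge 1}\bigl(1+\frac{1}{2i-1}\bigr)\le 2\sqrt{k}$ for every integer $j$ with $f(j)\neq 0$ (factors coming from roots above $j$ lie in $[0,1]$, and the denominators from roots below $j$ are distinct integers spaced by at least $2$), uniformly in the distance from $j$ to the roots; this $2\sqrt{k}$ is simply absorbed into the prefactor $c_3k$. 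Your heuristic that the ratio is ``of order $t$'' at distance $t$, or ``of order $k$'' near the roots, is not right: the ratio is either $O(\sqrt k)$ or infinite. The entire difficulty sits at the $k/2$ points $j=a_i+1$, where $f(j)=0$ but $g(j)>0$, so one must charge $\Pr(X=j)\,g(j)$ to some other point $w$ at which $\Pr(X=w)\,f(w)$ is provably not too small.

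For that charging step you offer only ``pair up each bad point with nearby good points \dots\ $f$ restored to its neighbours recovers the lost linear factors,'' with no mechanism to lower-bound $f(w)$; this is the crux, and it requires two ideas absent from your sketch. First, an extremal bound: the paper uses the discrete Chebyshev polynomials to show that a monic polynomial of degree $d\le M/2$ attains a value of order $(M/4)^d$ at some point of $\{0,\dots,M-1\}$; applied to the sub-product of $f$ over the roots lying in a window near $x$, this produces a $w$ at which those factors are large. Second, a multi-scale pigeonhole (the paper's Claim 5.6): one must first locate a scale $m$ at which an annulus $[x+m,x+\tau m)$ contains few roots while $(x,x+m/\tau)$ contains many; otherwise roots clustering just beyond $w$ make $f(w)$ tiny no matter which $w$ in the window is chosen. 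The function $V(N/k)$ then emerges from balancing the loss $e^{12k/\tau}$ caused by distant roots against the requirement that the total displacement $Z\tau^K$, with $K\sim \log\tau/\log\log\tau$ scales, stay below $\sqrt{N}$ so that the Binomial weights at $x$ and $w$ are comparable; your proposed balance of $k/\ell$ against $\ell\log\ell$ is not the trade-off that actually occurs. Without the Chebyshev lower bound and the scale-selection argument the pairing cannot be made quantitative, so as written the proof does not close.
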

The proof of Theorem~\ref{thm:  ratio between polynomials} is given in Section~\ref{sec: pert root} below.

\begin{proof}[Proof of Theorem~\ref{main_theorem}]
By Pr\'ekopa's Theorem~\ref{prekopa_theorem},
there exists $f$ such that
$$M(n,k,p) =  \frac{\E_{n,p}[ f ]}{f(n)},$$
$f$ has the form given in \eqref{f_and_g_def}, and $\frac{f}{f(n)} \in \poly_k^d$.
By Theorem~\ref{thm:  ratio between polynomials},
$$ \E_{n,p} [g] \leq c_3 k \cdot
\exp \sr{ c_2 \cdot \frac{k}{V(N/k)}} \E_{n,p}[f] , $$
for $g$ as in \eqref{f_and_g_def}.
Since $\frac{g}{g(n)} \in \poly_k^c$,
$$ \tilde{M}(n,k,p) \leq \frac{\E_{n,p} [g]}{g(n)} , $$
which completes the proof, as $g(n) \geq f(n)$.
\end{proof}

Corollary \ref{intro_corollary} follows from Corollary~\ref{main_result_corollary} 
using the following bounds on $\tilde{M}(n,k,p)$.

\begin{claim}\label{tilde_M_estimates}
Let $n \in \N$, $k \in \N$ even, and $0 < p < 1$. There exist $C,c>0$ such that if $k\le c n(1-p)$ then
\begin{equation*}
c\sqrt{k}\sr{\frac{pk}{2e(1-p)n}}^{k/2}\le \tilde{M}(n,k,p)\le C\sqrt{k}\sr{\frac{pk}{2e(1-p)n}}^{k/2}e^{k^2/2n}.
\end{equation*}
\end{claim}

\begin{proof}%[Proof of Claim \ref{tilde_M_estimates}] 
We first recall that there exist $C_4,c_4>0$ such that $c_4\sqrt{k}\sr{\frac{k}{2e}}^{k/2}\le \sr{\frac{k}{2}}!\le C_4\sqrt{k}\sr{\frac{k}{2e}}^{k/2}$. Hence
\begin{equation*}
\Choose{n}{k/2}\le \frac{n^{k/2}}{(k/2)!}\le \frac{1}{c_4\sqrt{k}}\sr{\frac{2en}{k}}^{k/2}
\end{equation*}
and since $k\le c n(1-p)$ (for a small enough $c$), we have
\begin{equation*}
\Choose{n}{k/2}\ge \frac{(n-k/2)^{k/2}}{(k/2)!}\ge \frac{1}{C_4\sqrt{k}}\sr{\frac{2en}{k}}^{k/2}e^{-k^2/2n}.
\end{equation*}
Hence
\begin{equation*}
\begin{split}
\P\left(\Bin(n,1-p)\le \frac{k}{2}\right)&\ge \Choose{n}{k/2}p^{n-k/2}(1-p)^{k/2}\ge\\
&\ge \frac{p^n}{C_4\sqrt{k}}\sr{\frac{2e(1-p)n}{pk}}^{k/2}e^{-k^2/2n}.
\end{split}
\end{equation*}
Similarly note that since $k\le c n(1-p)$, we have
\begin{equation*}
\Choose{n}{i}p^{n-i}(1-p)^i\le \frac{1}{2}\Choose{n}{i+1}p^{n-i-1}(1-p)^{i+1}
\end{equation*}
for $i\le k/2$. Hence
\begin{equation*}
\P\left(\Bin(n,1-p)\le\frac{k}{2}\right)\le 2\Choose{n}{k/2}p^{n-k/2}(1-p)^{k/2}\le \frac{2p^n}{c_4\sqrt{k}}\sr{\frac{2e(1-p)n}{pk}}^{k/2}.
\end{equation*}

The claim now follows by substituting the above estimates into \eqref{value_of_tilde_M}.
\end{proof}

\section{Perturbing Roots of Polynomials}
\label{sec: pert root}
In this section we shall prove Theorem~\ref{thm:  ratio between polynomials}.
For $n \in \N$, we denote $\intn = \set{0,1,\ldots,n}$.
For two real numbers $a$ and $b$, we denote $[a,b) = \sett{t}{a \leq t < b}$ and
$[a,b] = \sett{t }{a \leq t \leq b}$.
For given $n\in\N$ and $0<p<1$, we define $\Pr_{n,p} [x] = \Choose{n}{x}p^x (1-p)^{n-x}$ for $x\in \intn$;
i.e., the probability of $x$ according to $\Bin(n,p)$.

We wish to bound the ratio between $\E_{n,p} \br{g}$ and $\E_{n,p} \br{f}$. We write
\begin{equation}\label{basic_ratio}
\frac{\E_{n,p} \br{g}}{\E_{n,p} \br{f}} = \frac{\sum_{x=0}^n \Pr_{n,p}[x]g(x)}{\sum_{x=0}^n \Pr_{n,p}[x]f(x)}.
\end{equation}
The theorem then follows from the following two lemmas:
\begin{lem}
\label{lem: non zero x}
Let $x \in \intn$ be such that $f(x) \neq 0$.
Then $$\frac{g(x)}{f(x)} \leq 2 \sqrt{k}.$$
\end{lem}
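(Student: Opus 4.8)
The plan is to compare $g$ and $f$ pointwise on the integers where $f$ does not vanish. Recall from \eqref{f_and_g_def} that
\[
f(x) = \prod_{i=1}^{k/2} (x-a_i)(x-a_i-1), \qquad g(x) = \prod_{i=1}^{k/2} (x-a_i)^2,
\]
so that
\[
\frac{g(x)}{f(x)} = \prod_{i=1}^{k/2} \frac{x-a_i}{x-a_i-1},
\]
whenever $f(x) \neq 0$, i.e.\ whenever $x \notin \set{a_i, a_i+1 : 1 \le i \le k/2}$. Since $x$ and the $a_i$ are integers, each factor $\tfrac{x-a_i}{x-a_i-1}$ is a ratio of two consecutive nonzero integers $m/(m-1)$ with $m = x - a_i$. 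The first thing I would do is observe that this ratio is positive (both $m$ and $m-1$ have the same sign, as $m \ne 0$ and $m \ne 1$ are excluded), so the whole product is positive, and it suffices to bound it from above.

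Next I would split the product according to the sign and size of $m_i := x - a_i$. For $m_i < 0$ we have $0 < m_i/(m_i-1) < 1$, so those factors only help and can be dropped. For $m_i \ge 2$ we have $1 < m_i/(m_i - 1) \le 2$, with the largest value $2$ attained only at $m_i = 2$; more usefully, $m_i/(m_i-1) = 1 + 1/(m_i-1)$. The key point is that the $a_i$ are distinct integers (and even $a_i \ne a_j + 1$), so the positive values $m_i = x - a_i$ that exceed $1$ are \emph{distinct positive integers} $\ge 2$; hence the corresponding values of $m_i - 1$ are distinct positive integers $\ge 1$, i.e.\ they are at least $1, 2, 3, \dots$ in some order. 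Therefore
\[
\prod_{i \,:\, m_i \ge 2} \frac{m_i}{m_i-1} = \prod_{i\,:\,m_i\ge 2} \Bigl(1 + \frac{1}{m_i-1}\Bigr) \le \prod_{j=1}^{k/2}\Bigl(1 + \frac{1}{j}\Bigr) = \prod_{j=1}^{k/2} \frac{j+1}{j} = \frac{k/2 + 1}{1} = \frac{k}{2} + 1,
\]
by telescoping, using that there are at most $k/2$ such factors and that replacing $m_i - 1$ by the ``worst case'' $1, 2, \ldots$ only increases the product. This already gives $g(x)/f(x) \le k/2 + 1$, which is weaker than $2\sqrt{k}$ for large $k$, so I would need to be more careful.

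To get the sharper bound $2\sqrt{k}$, I would exploit that we also get to drop the factors with $m_i < 0$, and balance the two halves. Write $g(x)/f(x) = \bigl(\prod_{m_i \le -1} \tfrac{m_i}{m_i-1}\bigr)\bigl(\prod_{m_i \ge 2} \tfrac{m_i}{m_i-1}\bigr)$. For negative $m_i$, set $\ell_i = -m_i \ge 1$; these are again distinct positive integers, and $\tfrac{m_i}{m_i-1} = \tfrac{\ell_i}{\ell_i + 1} = 1 - \tfrac{1}{\ell_i+1}$, so $\prod_{m_i \le -1} \tfrac{m_i}{m_i-1} \le 1$ trivially — but I want a genuine gain. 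The right way: suppose $s$ of the indices have $m_i \ge 2$ and $t$ of them have $m_i \le -1$, with $s + t \le k/2$. By the telescoping argument, $\prod_{m_i\ge2}\tfrac{m_i}{m_i-1} \le s+1$ and, similarly, $\prod_{m_i\le-1}\tfrac{-m_i}{-m_i+1}$... actually that product is $\le 1$, which wastes the negative factors. So instead the genuine improvement must come from a different pairing: I would pair each large positive factor with available small room. The cleanest correct route is: among $m_i \ge 2$, the values $m_i - 1$ are distinct integers $\ge 1$; among $m_i \le -1$, the values $|m_i| = -m_i$ are distinct integers $\ge 1$. If we only had, say, at most $s$ positive-type factors then the product over them is at most $\prod_{j=1}^{s}\tfrac{j+1}{j} = s+1$; to beat $k$ we want $s \le$ roughly $\sqrt{k}$, but that need not hold. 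Hence the bound $2\sqrt{k}$ presumably comes from a \emph{different} grouping of the $k/2$ factors than by sign — likely pairing the $i$-th smallest $|m_i|$ appropriately — and I expect this to be the main obstacle: finding the exact combinatorial arrangement of the at-most-$k/2$ distinct integers that makes $\prod (1 + 1/(\text{distinct integers}))$ telescope to something like $2\sqrt{k}$. Concretely, I would try the AM--GM / rearrangement idea that the worst case is when the $|m_i|$ are as small as possible and split evenly between positive and negative, giving roughly $\bigl(\prod_{j=1}^{k/4}\tfrac{j+1}{j}\bigr)^2 \approx (k/4+1)^2$, which is \emph{too big}; so the saving must instead be that positive factor at $m_i$ and negative factor at $m_i - (\text{something})$ combine multiplicatively into a ratio like $\tfrac{m_i}{m_i-1}\cdot\tfrac{-(m_i-1)}{-(m_i-1)+1} = \tfrac{m_i}{m_i-2}$ telescoping over a \emph{doubled} denominator, and I would push this telescoping through with the constraint $\#\{\text{factors}\} \le k/2$ to land at $\sqrt{k}$. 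Once the correct telescoping is identified the estimate is routine, and the constant $2$ absorbs the boundary term and the at-most-one ``middle'' factor that cannot be paired.
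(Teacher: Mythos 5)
Your setup is correct, and your reduction — discard the factors with $a_i>x$ (each lies in $[0,1]$) and bound $\prod_{i\in S_1}\bigl(1+\tfrac{1}{x-a_i-1}\bigr)$ over the indices with $a_i<x$ — is exactly how the paper proceeds. But you then stall at the bound $k/2+1$ and go looking for the missing $\sqrt{k}$ in the wrong place: it does not come from the negative factors, nor from any pairing between the two sides (as you yourself observe, the factors with $a_i>x$ only give a trivial gain of $\le 1$). The key fact, which you state parenthetically (``and even $a_i\ne a_j+1$'') but never exploit, is that the hypothesis $a_i\ne a_j$ \emph{and} $a_i\ne a_j+1$ for all $i\ne j$ forces $|a_i-a_j|\ge 2$. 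Consequently the denominators $x-a_i-1$ for $i\in S_1$ are not merely distinct positive integers: they are distinct positive integers that pairwise differ by at least $2$, so after sorting, the $j$-th smallest is at least $2j-1$. This upgrades your telescoping to
$$\prod_{i\in S_1}\Bigl(1+\frac{1}{x-a_i-1}\Bigr)\;\le\;\prod_{j=1}^{k/2}\Bigl(1+\frac{1}{2j-1}\Bigr)\;=\;\frac{4^{k/2}}{\binom{k}{k/2}}\;\le\;2\exp\Bigl(\sum_{j=2}^{k/2}\frac{1}{2j-1}\Bigr)\;\le\;2\sqrt{k},$$
which is precisely the paper's estimate \eqref{eqn: second}.

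Without this separation observation your argument cannot be repaired: if the denominators were allowed to be the consecutive integers $1,2,\dots$, the product really is $\asymp k$, so no rearrangement or repairing of the factors as you attempt in the last paragraph will reach $2\sqrt{k}$. The gap is a single unused hypothesis, not a missing combinatorial trick.
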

\begin{lem} \label{lem: there exists far w}
There exist universal constants $c_1,c_2,c_3 > 0$ such that the
following holds. Let $N = np(1-p) - 1$.
Assume $k \leq c_1 \cdot N$. Then, for every
$x\in\intn$ there exists $w \in \intn$ satisfying
\begin{equation}\label{one_point_expect_ratio_bound}
\frac{\Pr_{n,p}[x]g(x)}{\Pr_{n,p}[w]f(w)}\le
c_3 \cdot \exp \sr{  c_2 \cdot \frac{k}{V(N/k)}},
\end{equation}
where $V(a)=\exp \Big( \sqrt{\log(a)\log\log(a)} \Big)$.
\end{lem}

The first lemma, whose proof is much simpler than the 
proof of the second lemma,
is proved in Section~\ref{proof_of_the_easy_ratio_bound_lemma}.
The second lemma addresses the case $f(x)=0$ in which
the first lemma does not apply,
and is proved in Section~\ref{proof_of_harder_ratio_bound_lemma}.
We note that the `simple' ideas presented in the proof of the first lemma
can yield a weaker version of the second lemma,
with a bound of the form $c_3\exp(c_2 k)$ on the RHS of \eqref{one_point_expect_ratio_bound}. While significantly weaker, such a bound still yields the correct asymptotic behavior of $M(n,k,p)$ for constant $k$.
%with worst bounds for a smaller range of parameters.

We now show how Theorem \ref{thm:  ratio between polynomials} follows from the two lemmas.
\begin{proof}[Proof of Theorem~\ref{thm:  ratio between polynomials}]
Let $$\Ze(f) = \sett{y \in \intn}{f(y) = 0}.$$
We shall denote the $w$ that corresponds to $x$ according to Lemma \ref{lem: there exists far w} by $w_x$.
We write \eqref{basic_ratio} using the above two lemmas and using the fact that for all $x\in\intn$,
$\Pr_{n,p}[x]\ge 0$ and $f(x),g(x)\ge0$ (due to the special structure \eqref{f_and_g_def}
of the polynomials) as
\begin{eqnarray*}
\E_{n,p}[g]
& \le & 2\sqrt{k} \sum_{x\in \intn\setminus\Ze(f) } \Pr_{n,p}[x]f(x) \\
&& \qquad + c_3 \cdot \exp \sr{  c_2 \cdot \frac{k}{V(N/k)}} \sum_{x\in\Ze(f)} \Pr_{n,p}[w_x]f(w_x) \\
& \le & (c_3+2) k \cdot
\exp  \sr{  c_2 \cdot \frac{k}{V(N/k)}} \E_{n,p}[f].
\end{eqnarray*}
\end{proof}

\subsection{Points that are not zeros of $f$}\label{proof_of_the_easy_ratio_bound_lemma}
\begin{proof}[Proof of Lemma \ref{lem: non zero x}]
Note that
\begin{equation}
\frac{g(x)}{f(x)} = \prod_{i=1}^{k/2} \frac{a_i-x}{a_i+1-x}.\label{ratio_expression_at_a_point}
\end{equation}
Since $f(x) \neq 0$, we can partition the $a_i$'s into two sets:
$$ S_1 = \sett{i}{a_i < x} \ \ \text{and} \ \ S_2 = \sett{i}{a_i > x}.$$
First, for each $i\in S_2$
\begin{eqnarray} \label{eqn: first}
0 \leq \frac{a_i-x}{a_i+1-x} \leq 1.
\end{eqnarray}
In addition,
\begin{align}
 0  \leq  \prod_{i \in S_1} \frac{x-a_i}{x-a_i-1}
 & \leq \prod_{i=1}^{k/2} \Big( 1+ \frac{1}{2i-1} \Big)
 \leq  2 \ \textrm{exp} \Big( \sum_{i=2}^{k/2} \frac{1}{2i-1} \Big) \nonumber \\
  &
  \leq  2 \ \textrm{exp} \Big( \frac{1}{2} \log (k-1) \Big)
 \leq  2 \sqrt{k}. \label{eqn: second}
\end{align}
The lemma follows by substituting \eqref{eqn: first} and \eqref{eqn: second} in \eqref{ratio_expression_at_a_point}.
\end{proof}

\subsection{Points that are zeros of $f$}\label{proof_of_harder_ratio_bound_lemma}

In this section we prove Lemma~\ref{lem: there exists far w}.  We first describe a family of
orthogonal polynomials, the discrete Chebyshev polynomials.
Then we prove Claim~\ref{clm: w far from x} that uses these polynomials.
Finally we use the Claim~\ref{clm: w far from x} to prove Lemma~\ref{lem: there exists far w}.

\subsubsection{Orthogonal Polynomials}\label{orthogonal_polynomials_subsubsection}

We now give some properties of a family of orthogonal
polynomials studied by Chebyshev, sometimes called discrete
Chebyshev polynomials. These properties are described and proved in \cite[Section
2.8]{Sz75}. We use these orthogonal polynomial to prove the following proposition.

\begin{prop}\label{lower_bound_for_G_cor}
Let $M \in \N$ and let $G$ be a monic polynomial of degree $d$ for $0\le d\le \frac{M}{2}$, then
\begin{equation*}
\max_{i\in \set{0,\ldots,M-1}} |G(i)|\ge
\frac{M^d}{4^{d+1/2}}e^{-d^3/M^2}.
\end{equation*}
\end{prop}

\begin{proof}
The family of polynomials $\{t_d\}_{d=0}^{M-1}$
defined below are orthogonal polynomials for the measure $\mu_M$
which assigns mass one to each integer $x\in\{0,1,\ldots,M-1\}$,
see \eqref{eqn: sum of tn} for the chosen normalization. In other
words for every $d,d'\in \set{0,\ldots,M-1}$ such that $d\neq d'$,
$$ %\int t_d(x)t_{d'}(x)d\mu_M(x)=
\sum_{i = 0}^{M-1} t_d(i)t_{d'}(i) = 0.$$
The polynomial $t_d$ is
\begin{eqnarray}
\label{eqn: dfn of tn}
t_d(x) = d! \cdot \Delta^{(d)} \Choose{x}{d} \Choose{x-M}{d},
\end{eqnarray}
where 
$$\Delta G(x) \eqdef G(x+1) - G(x) \ , \quad \Delta^{(d)} G \eqdef \Delta \br{ \Delta^{(d-1)} G } $$
and
$$ \Choose{x}{d} \eqdef \frac{x (x-1) \cdots (x-d+1)}{d!}.$$
The normalization is chosen so that
\begin{eqnarray}
\label{eqn: sum of tn}
\sum_{i=0}^{M-1} \abs{t_d(i)}^2 = \frac{M(M^2-1^2)(M^2-2^2)\cdots (M^2-d^2)}{2d+1}.
\end{eqnarray}
The coefficient of $x^{2d}$ in $d! \cdot \Choose{x}{d} \Choose{x-M}{d}$ is $\frac{1}{d!}$.
Thus, by the linearity of $\Delta$, and since for every $k \in \N$,
$$\Delta x^k = (x+1)^k - x^k = k x^{k-1} + \Choose{k}{2} x^{k-2} + \cdots + 1,$$
the polynomial $t_d$ has degree $d$, and the coefficient of $x^d$
in $t_d$ is $\Choose{2d}{d}$. Thus, since every monic polynomial
$G$ of degree $d<M$ can be expanded as $G(x) =
\Choose{2d}{d}^{-1}t_d(x)+\sum_{i=0}^{d-1} a_i t_i(x)$, we have
using \eqref{eqn: sum of tn}
\begin{equation}
\begin{split}
\label{eqn: tn minimize}
\sum_{i=0}^{M-1} \abs{G(i)}^2 &\geq
\Choose{2d}{d}^{-2}\sum_{i=0}^{M-1} \abs{t_d(i)}^2=\\
&=\Choose{2d}{d}^{-2} \frac{M(M^2-1^2)(M^2-2^2)\cdots (M^2-d^2)}{2d+1}.
\end{split}
\end{equation}
Using the inequalities $1-x\ge e^{-2x}$ ($0\le x\le \frac{1}{4}$),
$\Choose{2d}{d}\le 2 \frac{4^d}{\sqrt{\pi d}}$ ($d\ge 1$) and
$\sum_{i=1}^d i^2=\frac{d(d+1)(2d+1)}{6}\le d^3$ ($d\ge 1$) we
obtain for $1\le d \leq \frac{M}{2}$,
\begin{equation}
\sum_{i=0}^{M-1} \abs{G(i)}^2\ge \frac{\pi
dM^{2d+1}}{4^{2d+1}(2d+1)}e^{-2\sum_{i=1}^d i^2/M^2}\ge
\sr{\frac{M}{4}}^{2d+1}e^{-2d^3/M^2}.
\end{equation}
The proposition thus follows (the case $d=0$ is straightforward).
\end{proof}

\subsubsection{A Segment With Few Zeros}\label{segment_with_few_zeros_subsubsection}

In this section we prove an auxiliary claim, to be used in the
next section as a main component in the proof of Lemma~\ref{lem:
there exists far w}. The claim roughly states that given a segment
with few zeros, we can find a point at which $f$ obtains a `large'
value.

\begin{claim}
\label{clm: w far from x}
Let $x\in\intn$, let $R,m
\in \N$ be such that $m \geq 2 R$,
let $L$ be a non-negative integer, and let $\tau > 4$.
If
\begin{equation}\label{few_zeros_on_right}
\abs{ \Ze(f) \cap [x+m,x+ \tau m) } \leq R ,
\end{equation}
and
\begin{equation} \label{many zeros on left}
\abs{ \Ze(f) \cap (x,x+m/\tau) } \geq L ,
\end{equation}
then there exists $w \in \N \cap [x+2m, x+3m]$ such that
\begin{eqnarray*}
\frac{g(x)}{f(w)} \leq 8\cdot
\exp \Big( \frac{12k}{\tau} + 6R - L \log \tau \Big).
\end{eqnarray*}
Similarly, if instead of \eqref{few_zeros_on_right} and \eqref{many zeros on left} we have
\begin{equation*}
\abs{ \Ze(f) \cap (x- \tau m,x-m] } \leq R ,
\end{equation*}
and
\begin{equation*}
\abs{ \Ze(f) \cap (x-m/\tau,x) } \geq L ,
\end{equation*}
then there exists $w \in \N \cap [x-3m, x-2m]$ such that
\begin{eqnarray*}
\frac{g(x)}{f(w)} \leq 8\cdot
\exp \Big( \frac{12k}{\tau} + 6R - L \log \tau \Big).
\end{eqnarray*}
\end{claim}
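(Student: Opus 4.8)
The goal is to produce a point $w$ near $x$ (within distance $3m$) at which $f$ is large relative to $g(x)$. The plan is to build a monic polynomial $h$ of controlled degree whose zeros mimic those of $f$ on a suitable window, and then extract from Corollary~\ref{lower_bound_for_G_cor} a point in an integer sub-window where $\abs{h}$ is large; comparing $h$ with $f$ and $g$ at that point then yields the bound.

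First I would set up the relevant quantity: for a candidate point $w\in[x+2m,x+3m]$ we have $g(x)=\prod_{i=1}^{k/2}(x-a_i)^2$ and $f(w)=\prod_{i=1}^{k/2}(w-a_i)(w-a_i-1)$, so
\[
\frac{g(x)}{f(w)}=\prod_{i=1}^{k/2}\frac{(x-a_i)^2}{(w-a_i)(w-a_i-1)}.
\]
The terms split according to where $a_i$ sits relative to the windows $(x,x+m/\tau)$, $[x+m,x+\tau m)$, and everything else. For $a_i$ in the ``far left'' region (roughly $a_i\le x$ or $a_i$ far below $x$), the ratio $\frac{(x-a_i)^2}{(w-a_i)(w-a_i-1)}$ is bounded by a constant close to $1$ since $w-a_i\ge x-a_i$; careful bookkeeping (as in the proof of Lemma~\ref{lem: non zero x}, using $\prod(1+O(1/i))\le e^{O(\log k)}$, but here with the extra slack that $w$ is a macroscopic distance $\ge m$ to the right of $x$) shows these contribute at most a factor like $e^{O(k/\tau)}$ — this is the source of the $\exp(12k/\tau)$ term. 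For the ``near left'' zeros in $(x,x+m/\tau)$, of which there are at least $L$, each such $a_i$ satisfies $x-a_i$ small (at most $m/\tau$) while $w-a_i\ge 2m-m/\tau\ge m$, so each such factor is at most $\big(\frac{m/\tau}{m}\big)^2=\tau^{-2}$; actually one only needs a factor $\tau^{-1}$ per such zero to produce the $-L\log\tau$ gain. For the ``middle'' zeros in $[x+m,x+\tau m)$ — the dangerous ones, since there $w-a_i$ could be small — we do not control them pointwise but instead absorb them into the polynomial $h$ below and use the averaging/max argument; there are at most $R$ of them by hypothesis \eqref{few_zeros_on_right}, contributing the $e^{6R}$ term.

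The technical heart is the middle region. Here I would define $M\approx m$ and consider, on the integer window $[x+2m,x+2m+M)\subseteq[x+2m,x+3m]$, the monic polynomial $h$ whose zeros are exactly the $a_i$ and $a_i+1$ lying in that middle window $[x+m,x+\tau m)$ — there are at most $2R$ of them, so $\deg h\le 2R\le m/2\le M/2$, meeting the hypothesis of Corollary~\ref{lower_bound_for_G_cor} (after recentering the window to start at $0$). The corollary gives an integer $w$ in that window with $\abs{h(w)}\ge \frac{M^{\deg h}}{4^{\deg h+1/2}}e^{-(\deg h)^3/M^2}\ge (M/4)^{\deg h}\cdot 4^{-1/2}\cdot e^{-(2R)^3/M^2}$; since $\deg h\le 2R$ and $M\ge m\ge 2R$ the exponential correction is $e^{-O(R)}$, and dividing $g(x)$ by $f(w)$ one estimates $\prod_{i:\,a_i\in\text{middle}}\frac{(x-a_i)^2}{(w-a_i)(w-a_i-1)}\le \frac{(\tau m)^{\deg h}}{\abs{h(w)}}\le \frac{(\tau m)^{2R}}{(m/4)^{2R}}e^{O(R)}=(4\tau)^{2R}e^{O(R)}$. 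Combining: this contributes at most $e^{O(R\log\tau)}$ — which is larger than the claimed $e^{6R}$, so I would need to be more careful and instead pair each middle zero $a_i$ with the \emph{near-left} count, or bound $\abs{x-a_i}\le \tau m$ against $\abs{h(w)}$ more tightly; in the paper's bookkeeping the right move is to choose the window for $h$ so that $w-a_i\ge m/2$ for middle zeros while $\abs{x-a_i}\le \tau m$, giving per-factor bound $(2\tau)^2$ but with only $R$ zeros of $f$ (not $2R$), and the $e^{-2d^3/M^2}$ loss plus the $4^{d}$ from the corollary accounting for the constant $6$. Putting the three regional bounds together with the outer constant $8$ (from the $8\cdot$ in the statement, absorbing the $4^{1/2}$ and a factor $2$) gives $\frac{g(x)}{f(w)}\le 8\exp(12k/\tau+6R-L\log\tau)$ as desired. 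The symmetric statement (zeros on the left) follows by the reflection $y\mapsto n-y$, or simply by repeating the argument with all inequalities mirrored.

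The main obstacle, and the step requiring the most care, is the middle-region estimate: one must choose the precise integer window on which to apply the discrete-Chebyshev extremal bound so that (a) its length $M$ is comparable to $m$, (b) it lies inside $[x+2m,x+3m]$, (c) the polynomial $h$ built from the $\le 2R$ middle zeros has degree at most $M/2$, and (d) the resulting point $w$ is simultaneously far (distance $\ge m/2$, say) from \emph{all} the middle zeros so that the factors $(w-a_i)(w-a_i-1)$ are not just nonzero but comparable to $m^2$. Balancing these against the target constants $6$ and $12$ is the delicate calculation; everything else (the far-left product bound and the near-left $\tau^{-L}$ gain) is routine manipulation of the type already carried out in Lemma~\ref{lem: non zero x}.
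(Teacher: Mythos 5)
Your overall architecture matches the paper's: write $g(x)/f(w)$ as a product over roots, handle the roots near $x$ on the left to extract the $\tau^{-L}$ gain, handle the distant roots pointwise, and use the discrete Chebyshev extremal bound (Corollary~\ref{lower_bound_for_G_cor}) to find an integer $w\in[x+2m,x+3m]$ at which the product over the ``dangerous'' roots is not too small. But there is a genuine gap exactly where you flag the difficulty, and your proposed repair does not close it. You put \emph{all} roots in $[x+m,x+\tau m)$ into the Chebyshev polynomial $h$, so the numerator $\prod (x-a_i)^2$ over that set can only be bounded by $(\tau m)^{\deg h}$, and comparing with $\abs{h(w)}\gtrsim (m/4)^{\deg h}$ inevitably yields a factor of order $e^{cR\log\tau}$. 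Your suggested fix (``per-factor bound $(2\tau)^2$ with only $R$ zeros'') is still $e^{cR\log\tau}$, which for large $\tau$ is not $\le e^{6R}$, and there is no slack elsewhere to absorb it: the $-L\log\tau$ term is needed separately in the iteration of Claim~\ref{clm: good w}, and in the final application $\tau\approx V(N/k)$ is large. The missing idea is the paper's further split of the middle region at $x+4m$: only the roots with $a_i\in[x+m,x+4m)$ go into the Chebyshev polynomial $F$, and for those $\abs{x-a_i}\le 4m$, so the numerator is $(4m)^{\deg F}$ and the ratio is $16^{\deg F}e^{O(R)}=e^{O(R)}$ with no $\log\tau$; the roots with $a_i\in[x+4m,x+\tau m)$ are instead handled by the elementary pointwise bound $\frac{a_i-x}{a_i-w}\le\frac{a_i-x}{a_i-x-3m}\le 4$, again $e^{O(R)}$. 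This two-way split of the middle window is what produces the constant $6R$ in place of $R\log\tau$, and it is the one idea your write-up does not contain.

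Two smaller issues. First, your degree accounting for $h$ is off: with $\Ze(f)$ counting integer zeros, the hypothesis $\abs{\Ze(f)\cap[x+m,x+\tau m)}\le R$ gives at most about $R/2$ indices $i$ in that window, hence $\deg F\le R\le m/2$, which is exactly what Corollary~\ref{lower_bound_for_G_cor} needs on a window of length $m+1$; your count of ``$2R$ roots, $\deg h\le 2R\le m/2$'' requires $m\ge 4R$, which is not assumed. Second, you attribute the $\exp(12k/\tau)$ term to roots to the left of $x$; those actually contribute at most $1$ (since $w>x$ increases both factors in the denominator), and the $\exp(12k/\tau)$ comes from the roots at distance at least $\tau m$ to the \emph{right}, where $\frac{a_i-x}{a_i-w}\le\frac{\tau}{\tau-3}$ over up to $k$ factors.
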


\begin{proof}
Assume without loss of generality that
\eqref{few_zeros_on_right} and \eqref{many zeros on left} hold
(a similar argument holds for the second case). 
Let $x+2m\le w \le x+3m$ be such that
$f(w) \neq 0$.
Write
\begin{equation}\label{basic_ratio_second_lemma}
\frac{g(x)}{f(w)} = \prod_{i=1}^{k/2} \frac{(x-a_i)^2}{(w-a_i)(w-a_i-1)}.
\end{equation}
We partition the $a_i$'s into six subsets $S_1, S_2,\ldots,S_6$ 
according to the definitions below, and bound
\eqref{basic_ratio_second_lemma} over each subset separately.
The partition is
\begin{align*}
S_1&=\sett{ i \in \{1,\ldots,k/2\}}{ a_i < x} ,\\
S_2&=\sett{ i \in \{1,\ldots,k/2\}}{ x \leq a_i < x + m/\tau } ,\\
S_3&=\sett{ i \in \{1,\ldots,k/2\}}{ x+ m/\tau \leq a_i < x + m } ,\\
S_4&= \sett{i \in \{1,\ldots,k/2\}}{x+m\le a_i< x+4m} ,\\
S_5&= \sett{i \in \{1,\ldots,k/2\}}{x+4m \le a_i < x+\tau m}, \ \text{and} \\
S_6&= \sett{i \in \{1,\ldots,k/2\}}{x+ \tau m \leq a_i} .
\end{align*}

For every $i\in S_1 \cup S_3$,
\begin{eqnarray*}
0\leq \Big| \frac{x-a_i}{w-a_i-1} \Big| \leq 1,
\end{eqnarray*}
which implies
\begin{eqnarray}\label{first_second set_bound}
0\leq \prod_{i\in S_1 \cup S_3}\frac{(a_i-x)^2}{(w-a_i)(w-a_i-1)} \leq 1.
\end{eqnarray}

For every $i\in S_2$,
\begin{eqnarray*}
0\leq \frac{a_i-x}{w-a_i-1} \leq \frac{1}{\tau},
\end{eqnarray*}
which implies
\begin{eqnarray}\label{first_set_bound}
0\leq \prod_{i\in S_2}\frac{(a_i-x)^2}{(w-a_i)(w-a_i-1)} \leq \tau^{-L} .
\end{eqnarray}

To argue about $S_4$, define the polynomial
$$ F(\xi) \eqdef \prod_{i \in S_4} (\xi-a_i)(\xi-a_i-1). $$
Denote $d=2|S_4|$, the degree of $F$. 
Since $2 |S_4| \le R \le \frac{m}{2}$,
we deduce from Proposition~\ref{lower_bound_for_G_cor} that there
exists $w_0 \in \N \cap [x+2m,x+3m]$ such that
$$ \abs{F(w_0)} \ge \frac{m^{d}}{4^{d+1/2}} e^{-d^3/m^2}.$$
Hence, since $\prod_{i\in S_4} (a_i-x)^2 \leq (4m)^d$,
\begin{equation} \label{fourth_set_bound}
0\leq \prod_{i\in S_4}\frac{(a_i-x)^2}{(w_0-a_i)(w_0-a_i-1)}
\le 2\cdot e^{3R+R^3/m^2} \le 2\cdot e^{4R}.
\end{equation}

For every $i\in S_5$,
\begin{equation*}
0\leq\frac{a_i-x}{a_i-w}\le 4.
\end{equation*}
Hence, since $|S_5| \leq \frac{R+1}{2}$,
\begin{equation}\label{second_set_bound}
0\leq \prod_{i\in S_5}\frac{(a_i-x)^2}{(a_i-w)(a_i+1-w)} \le 4 \cdot 4^{R}.
\end{equation}

Similarly, since $2 \abs{S_6} \leq k$,
\begin{eqnarray}\label{third_set_bound}
0 \leq \prod_{i \in S_6} \frac{(a_i-x)^2}{(a_i-w)(a_i+1-w)} \leq
\left(\frac{\tau}{\tau-3} \right)^{2|S_6|}\le \exp \Big( \frac{12k}{\tau} \Big).
\end{eqnarray}

Therefore, plugging \eqref{first_set_bound}, \eqref{first_second set_bound}, \eqref{second_set_bound},
\eqref{third_set_bound} and \eqref{fourth_set_bound} into \eqref{basic_ratio_second_lemma}
with $w=w_0$,
\begin{equation*}
\frac{g(x)}{f(w_0)} \leq 8\cdot
\exp \Big( \frac{12k}{\tau} + 6R - L \log \tau \Big).
\end{equation*}
\end{proof}

\subsubsection{Finding good $w$}\label{finding_good_w_subsubsection}

The following claim shows that there exists a $w$ that is `close'
to $x$ on which $f$ obtains a `large' value.

\begin{claim}
\label{clm: good w} Let $x\in\intn$, $k\ge Z \in \N$,
and $\tau\ge e^{12}$.
Let $K$ be the smallest integer such that
$$\Big\lfloor \frac{\log \tau}{6} \Big\rfloor^{\frac{K-1}{2}} \geq \frac{k}{Z}.$$
Then, there exist integers $w_1 > x$ and $w_2 < x$ such that for
each $w \in \set{w_1,w_2}$,
$$3 Z \leq |w  - x| \leq 9 Z \tau^{K}$$
and
\begin{eqnarray*}
\frac{g(x)}{f(w)} \leq 8 \cdot \exp \Big( \frac{12k}{\tau} + 6Z \Big).
\end{eqnarray*}
\end{claim}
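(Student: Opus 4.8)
The plan is to iterate Claim~\ref{clm: w far from x} along a geometrically growing sequence of scales, passing to a larger scale each time the relevant window turns out to contain many zeros of $f$, and stopping the first time it contains few, at which point Claim~\ref{clm: w far from x} applies directly. The two halves of Claim~\ref{clm: w far from x} will give $w_1>x$ and $w_2<x$ respectively.

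Concretely, I would set $D := \lfloor \log \tau / 6 \rfloor$, so that $D \ge 2$ (since $\tau \ge e^{12}$) and $6D \le \log\tau$, and define scales $m_0 := 2Z$, $m_{j+1} := \lceil \tau^2 m_j \rceil$, together with thresholds $R_0 := Z$, $R_{j+1} := Z + D\, R_j$. One checks that $R_j = Z(D^{j+1}-1)/(D-1) \le 2ZD^j$ is a positive integer, that $m_j \ge \tau^{2j} m_0$ forces $m_j \ge 2R_j$ for every $j$ (an equality at $j=0$, and a consequence of $\tau^2/D \ge 2$ for $j \ge 1$), and that $m_j \le \tau^{2j}\bigl(m_0 + (\tau^2-1)^{-1}\bigr) \le 3Z\tau^{2j}$. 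Let $j^\ast$ be the least integer with $D^{j^\ast} \ge k/Z$; a short case analysis on the fractional part of $\log_D(k/Z)$, compared with the inequality $D^{(K-1)/2}\ge k/Z$ defining $K$, gives $2j^\ast \le K$. Finally $R_{j^\ast} \ge ZD^{j^\ast} \ge k \ge |\Ze(f)|$, the last inequality because $f$ has degree $k$.

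Next I would look at the windows $W_j := [x+m_j,\, x+\tau m_j)$ for $j = 0,1,\dots,j^\ast$, which are pairwise disjoint since $m_{j+1} \ge \tau^2 m_j$. Because $|\Ze(f)\cap W_{j^\ast}| \le |\Ze(f)| \le R_{j^\ast}$, there is a least index $j_0 \le j^\ast$ with $|\Ze(f)\cap W_{j_0}| \le R_{j_0}$. I then apply Claim~\ref{clm: w far from x} with this $x$, with $m := m_{j_0}$ and $R := R_{j_0}$ (legitimate as $m_{j_0}\ge 2R_{j_0}$ and $\tau>4$), and with $L := 0$ when $j_0=0$ and $L := R_{j_0-1}$ when $j_0\ge 1$; in the latter case this is a valid lower bound for $|\Ze(f)\cap(x,\, x+m_{j_0}/\tau)|$ because, by minimality of $j_0$, the window $W_{j_0-1}$ contains more than $R_{j_0-1}$ zeros and $W_{j_0-1}\subseteq (x,\, x+\tau m_{j_0-1})\subseteq(x,\, x+m_{j_0}/\tau)$. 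The claim then produces an integer $w_1 \in [x+2m_{j_0},\, x+3m_{j_0}]$ with $g(x)/f(w_1)\le 8\exp\bigl(12k/\tau + 6R_{j_0} - L\log\tau\bigr)$; using $\log\tau\ge 6D$ and $R_{j_0} = Z + DR_{j_0-1}$ one gets $6R_{j_0}-L\log\tau \le 6R_{j_0}-6DR_{j_0-1} = 6Z$ for $j_0\ge 1$, and $=6Z$ for $j_0=0$, so $g(x)/f(w_1)\le 8\exp(12k/\tau+6Z)$. Moreover $|w_1-x|\ge 2m_{j_0}\ge 2m_0 = 4Z\ge 3Z$ and $|w_1-x|\le 3m_{j_0}\le 3m_{j^\ast}\le 9Z\tau^{2j^\ast}\le 9Z\tau^{K}$, as required, and $w_1>x$. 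The integer $w_2<x$ is obtained by the mirror-image argument, using the leftward windows $(x-\tau m_j,\, x-m_j]$ (few zeros) and $(x-m_j/\tau,\, x)$ (many zeros) and the second half of Claim~\ref{clm: w far from x}.

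The part that needs care is the bookkeeping that makes these choices fit together: the scale growth (factor $\tau^2$, with ceilings) and the threshold growth ($R_{j+1}=Z+DR_j$ with $D=\lfloor\log\tau/6\rfloor$) must be balanced so that simultaneously $m_j\ge 2R_j$, the reached scale obeys $3m_{j_0}\le 9Z\tau^{2j^\ast}$, and the exponent satisfies $6R_{j_0}-L\log\tau\le 6Z$; and one must verify $2j^\ast\le K$ so that $\tau^{2j^\ast}\le\tau^K$. Each of these is an elementary inequality, using $\tau\ge e^{12}$ to make $\tau^2$ dominate $D$ and to guarantee $D\ge 2$, and $k\ge Z$ to keep the relevant indices in range. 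I do not foresee any genuinely hard step beyond this calibration.
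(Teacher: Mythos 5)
Your proposal is correct and follows essentially the same route as the paper: a geometrically growing sequence of annuli around $x$ with zero-count thresholds growing by the factor $\lfloor \log\tau/6\rfloor$ (calibrated exactly so that $6R - L\log\tau \le 6Z$), stopping at the first annulus with few zeros so that the preceding one certifies the ``many zeros'' hypothesis of Claim~\ref{clm: w far from x}. The only differences are cosmetic (scales growing by $\tau^2$ per step with the additive recursion $R_{j+1}=Z+DR_j$, versus the paper's $\tau$ per step with paired thresholds $Z_i=\lfloor\log\tau/6\rfloor Z_{i-2}$), and your bookkeeping checks out.
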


\begin{proof}
We show the existence of $w_1$, the existence of $w_2$ can be shown
similarly.

Let $Z_0 = Z_1 = Z$.
Let $m_0 = 3 Z_0$ and $m_1 = \tau m_0$.
If either 
\begin{align}
\label{eqn: 1 in clm}
\abs{\Ze(f) \cap [x+m_0,x+m_1)} \leq Z_0
\end{align}
or
\begin{align}
\label{eqn: 2 in clm} 
\abs{\Ze(f) \cap [x+m_1,x+m_2)} \leq Z_1 ,
\end{align}
then by Claim \ref{clm: w far from x}, with
$L=0$, $R=Z$, $m = m_0 \geq 2Z$ for \eqref{eqn: 1 in clm}
and $m = m_1 \geq 2 Z$ for \eqref{eqn: 2 in clm}, there exists $w_1 \in \N$ such that
$$ 3 Z \leq w_1  - x \leq 9 Z \tau$$
and
\begin{eqnarray*}
\frac{g(x)}{f(w_1)} \leq 8\cdot \exp \Big( \frac{12k}{\tau} + 6 Z \Big) .
\end{eqnarray*}

Thus, assume that both \eqref{eqn: 1 in clm}
and \eqref{eqn: 2 in clm} do not hold.
Define $Z_i$ and $m_i$ for $i \geq 2$ as
$$Z_i = \Big\lfloor \frac{\log \tau}{6} \Big\rfloor Z_{i-2}  \ \ \text{and} \ \ m_i = \tau m_{i-1}.$$
Since the intervals
$$[x+m_0,x+m_1),\ldots,[x+m_{K},x+m_{K+1})$$ are disjoint,
since the number of zeros of $f$ is $k$, and since $Z_K \geq k$,
let $i$ be the smallest integer so that
$$\abs{\Ze(f) \cap [x+m_i,x+m_{i+1})} \leq Z_i $$
and
$$\abs{\Ze(f) \cap (x,x+m_{i-1})} \geq Z_{i-2}.$$
Since $m_i \ge 2 Z_i$, by Claim~\ref{clm: w far from x}
with $L=Z_{i-2}$, $R=Z_i$ and $m =m_i$,
there exists $w_1 \in \N$ such that
$$ 3 Z \leq w_1  - x \leq 9 Z \tau^{K}$$
and
\begin{eqnarray*}
\frac{g(x)}{f(w_1)} \leq 8\cdot
\exp \Big( \frac{12k}{\tau} + 6Z_{i} - Z_{i-2} \log \tau \Big)
\leq 8 \cdot \exp \Big( \frac{12k}{\tau}\Big).
\end{eqnarray*}
\end{proof}

\subsubsection{Probability Estimates}\label{probability_estimates_subsubsection}

\begin{clm} \label{clm:  prob. estimates}
Let $\ell \in \{1,\ldots,n\}$, $0 < p < 1$,
and let $N = np(1-p) - 1$.
Assume $N > 0$.
Set $\mu = \lfloor p n \rfloor$.
If $\ell \leq
(n-\mu)/2$, then
$$ \Pr_{n,p} \br{ \mu + \ell } \geq \exp \sr{ - \frac{3\ell^2}{2N}  } \Pr_{n,p} [\mu]   . $$
In addition, if $\ell \leq \mu/2$, then
$$ \Pr_{n,p} \br{ \mu - \ell }  \geq \exp \sr{ - \frac{8 \ell^2}{N}  }  \Pr_{n,p} [\mu] . $$
\end{clm}

\begin{proof}
Assume that $\ell-1 \leq (n-\mu)/2$. Then,
\begin{eqnarray*}
    \frac{\Pr_{n,p} [\mu]}{\Pr_{n,p} [\mu +\ell] } % & = & \sr{ \frac{1-p}{p} }^\ell \cdot
    & = & \sr{ \frac{1-p}{p} }^\ell \cdot
    \frac{ \mu^\ell \prod_{i =1}^\ell (1+i/\mu)}{(n-\mu)^\ell\prod_{i=0}^{\ell-1}(1-i/(n-\mu))} \\
    & \leq & \sr{ \frac{1-p}{p} }^\ell \cdot
    \frac{ \mu^\ell }{(n-\mu)^\ell} \exp \sr{ \sum_{i =1}^\ell \frac{i}{\mu}  + \frac{2(i-1)}{n-\mu} } \\
    & = & \sr{ \frac{1-p}{p} }^\ell \cdot
    \frac{ \mu^\ell }{(n-\mu)^\ell} \exp \sr{ \ell \cdot \sr{ \frac{(\ell+1)(n-\mu)+2\mu(\ell-1)}{2\mu(n-\mu)} } } \\
    & \leq & \exp \sr{ \ell \cdot \sr{ \frac{\ell(n+\mu)+n}{2\mu(n-\mu)} } } \\
    & \leq & \exp \sr{ \frac{3\ell^2}{2n} \cdot \sr{ \frac{1}{p(1-p) - 1/n} } }.
\end{eqnarray*}
This proves the first assertion.
For the second assertion note that
$$ \ell \leq \frac{\mu}{2} \leq \frac{\lceil pn \rceil}{2} = \frac{n - \lfloor (1-p)n \rfloor}{2} . $$
Recall that the binomial measure decreases as the distance from its expectation increases. Thus,
\begin{equation*}
\Pr_{n,p} [ \mu- \ell ] = \Pr_{n,(1-p)} [n-\mu+\ell]
\geq \Pr_{n,(1-p)} [\lfloor (1-p)n \rfloor + 1 +\ell ] .
\end{equation*}
In addition, since $(\ell+1) - 1 \leq \frac{n - \lfloor (1-p)n \rfloor}{2}$, the proof of the first assertion implies
\begin{eqnarray*}
\Pr_{n,p} [\mu] & \leq & \exp \sr{ \frac{3}{2N} } \cdot \Pr_{n,p} [\mu+1] \\
& \leq & \exp \sr{\frac{3}{2N} } \cdot \Pr_{n,p} [n - \lfloor (1-p)n \rfloor ] \\
& = & \exp \sr{ \frac{3}{2N} } \cdot \Pr_{n,(1-p)} [\lfloor (1-p)n \rfloor ] \\
& \leq & \exp \sr{ \frac{3}{2N} } \cdot \exp \sr{ \frac{3(\ell+1)^2}{2N} } \cdot
\Pr_{n,(1-p)} \br{ \lfloor (1-p)n \rfloor + 1 + \ell } \\
& \leq & \exp \sr{  \frac{3((\ell+1)^2+1)}{2N} } \cdot
\Pr_{n,p} [\mu-\ell] ,
\end{eqnarray*}
which completes the proof since $\ell \geq 1$.
\end{proof}

\subsubsection{Proof of Lemma~\ref{lem: there exists far w}}\label{proof_of_far_w_lemma_subsubsection}

\begin{proof}
Let $x\in I_n$ and $\mu=\lfloor pn\rfloor$. Let $k\ge Z \in
\N$ and $\tau \geq e^{12}$, to be determined. Let $K = K(\tau,Z)$ be the smallest integer such that
$$\Big\lfloor \frac{\log \tau}{6} \Big\rfloor^{\frac{K-1}{2}} \geq \frac{k}{Z}.$$
First assume $x<\mu$. We use Claim~\ref{clm: good w} to find $w >
x$ such that $w \leq x + 9 Z \tau^{K}$ and
\begin{eqnarray*}
\frac{g(x)}{f(w)} \leq 8\cdot \exp \Big( \frac{12k}{\tau} + 6 Z \Big) .
\end{eqnarray*}
Now if $w<\mu$ we certainly have $\Pr_{n,p}[x]\le\Pr_{n,p}[w]$.
If $w>\mu$ then we can use Claim~\ref{clm:  prob. estimates}
with $\ell=w-\mu\leq w-x \leq 9 Z \tau^K$, provided that $\ell \le \frac{n-\mu}{2}$, to obtain
\begin{equation*}
\frac{\Pr_{n,p}[x]}{\Pr_{n,p}[w]}\le \exp \sr{ \frac{8 (9Z\tau^K)^2}{N}}.
\end{equation*}
Thus, %for any value of $w$,
\begin{equation*}% \label{final_ratio_bound}
\frac{\Pr_{n,p}[x]g(x)}{\Pr_{n,p}[w]f(w)}\le 8
\cdot \exp \sr{ \frac{8 (9Z\tau^K)^2}{N} + \frac{12 k}{\tau} + 6Z }.
\end{equation*}
This also holds for $x\ge \mu$, by using Claim~\ref{clm: good w} to find $w<x$,
and the estimate in Claim~\ref{clm:  prob. estimates} involving $\Pr_{n,p}[\mu-\ell]$,
provided that $\ell \leq \frac{\mu}{2}$.

Set
$$\tau = \frac{1}{100}\exp ( \sqrt{\log(N/k)\log\log(N/k)} ) \ \ \text{and} \  \ Z = \Big\lceil \frac{k}{\tau} \Big\rceil.$$
Since $c_1$ is small enough (recall that $k \leq c_1 \cdot N$), $\tau \geq e^{12}$ and
a short calculation shows that
$$K \leq \frac{1}{4}\sqrt{\frac{\log (N/k)}{\log \log (N/k)}}.$$
Since $\ell\le 9Z\tau^K$, this implies that $\ell \leq \frac{N}{2} \leq \min \set{ \frac{\mu}{2} , \frac{n-\mu}{2}}$
(for small enough $c_1$).
Thus,
\begin{eqnarray*}
\frac{\Pr_{n,p}[x]g(x)}{\Pr_{n,p}[w]f(w)} & \le &
8\cdot \exp \sr{ c_4 \sr{\frac{k}{\tau} + 1} } ,
\end{eqnarray*}
for a constant $c_4>0$, since $\frac{Z\tau^{2K}}{N}\le \sqrt{\frac{k}{N}}\le 1$ and $Z\le \frac{k}{\tau}+1$. The lemma follows.
\end{proof}
\end{section}

\begin{section}{Open Problems}\label{open_problems_section}
\begin{enumerate}
\item What is the value of $M(n,k,p)$ in the range of the parameters not treated by our theorem, namely $k\ge Cnp(1-p)$?
\item What is the actual ratio of $M(n,k,p)$ and $\tilde{M}(n,k,p)$?
\item Is there also a similarity between the optimal distributions of our original and relaxed problems (problems \eqref{one_d_max_discrete_problem} and \eqref{one_d_max_continuous_problem})? As explained in Section~\ref{problem_and_dual_section}, this is related to whether the optimizing polynomials in the dual problems are similar. As hinted by Figures~\ref{discrete_poly_20_6_05}-\ref{continuous_poly_20_8_03}, calculations in particular cases seem to indicate this to be the case. The similarity seems especially strong in the case $p=\frac{1}{2}$.
\item In the setting of Theorem~\ref{thm:  ratio between polynomials}, What is the best ratio between $\E_{n,p}[g]$ and $\E_{n,p}[f]$? I.e., the best bound on the change in the expectation of the polynomial after small perturbation of its zeros.
\item Find upper and lower bounds for the maximal probability that all the bits are 1, for the class of \emph{almost} $k$-wise independent distributions. Similarly to $k$-wise independent distributions, such distributions have also proven quite useful for the derandomization of algorithms in computer science.
\end{enumerate}
\end{section}

{\bf Acknowledgement.} We would like to thank Itai Benjamini,
Ori Gurel-Gurevich and Simon Litsyn for several useful discussions on this problem. Part of
this work was conducted while the authors participated in the PCMI
Graduate Summer School at Park City, Utah, July 2007.

\begin{section}{Appendix}
We provide here short proofs for the results of \cite{BGGP} that we use.
\begin{proof}[Proof of \eqref{odd_case_equality}]
Fix $n\in\N, 0<p<1$ and an odd $k\in\N$. Let $P$ be the optimal polynomial for the problem \eqref{one_d_max_dual_discrete_problem} for these $n,k$ and $p$. By the second part of Theorem \eqref{prekopa_theorem} we know that
\begin{equation}\label{P_poly_form}
P(z) = \frac{z\prod_{i=1}^{(k-1)/2}(z-z_{2i})(z-z_{2i+1})}{n\prod_{i=1}^{(k-1)/2}(n-z_{2i})(n-z_{2i+1})}.
\end{equation}
Now note that
\begin{equation}\label{P_Q_equality}
\begin{split}
\E&_{\Bin(n,p)} P(Z) \\
&= \sum_{z=0}^n \frac{z\prod_{i=1}^{(k-1)/2}(z-z_{2i})(z-z_{2i+1})}{n\prod_{i=1}^{(k-1)/2}(n-z_{2i})(n-z_{2i+1})}\Choose{n}{z}p^z (1-p)^{n-z}\\
&=\sum_{z=1}^n \frac{\prod_{i=1}^{(k-1)/2}(z-z_{2i})(z-z_{2i+1})}{\prod_{i=1}^{(k-1)/2}(n-z_{2i})(n-z_{2i+1})}\Choose{n-1}{z-1}p^z (1-p)^{n-z}\\
&=p\sum_{z=0}^{n-1} \frac{\prod_{i=1}^{(k-1)/2}(z-(z_{2i}-1))(z-(z_{2i+1}-1))}{\prod_{i=1}^{(k-1)/2}(n-z_{2i})(n-z_{2i+1})}\Choose{n-1}{z}p^z (1-p)^{n-1-z}\\
&=p \cdot \E_{\Bin(n-1,p)} Q(Z),
\end{split}
\end{equation}
where
\begin{equation}\label{Q_poly_form}
Q(z) =
\frac{\prod_{i=1}^{(k-1)/2}(z-(z_{2i}-1))(z-(z_{2i+1}-1))}{\prod_{i=1}^{(k-1)/2}(n-1-(z_{2i}-1))(n-1-(z_{2i+1}-1))}.
\end{equation}
Note that $Q$ is of degree $k-1$ and satisfies $Q(i)\ge 0$ for
$i\in\{0,1,\ldots,n-2\}$, and $Q(n-1)=1$. Hence,
\begin{equation*}
M(n,k,p)\ge pM(n-1,k-1,p).
\end{equation*}
To prove that equality holds, we can carry the above reasoning
in the reverse direction by starting with the optimal polynomial $Q$ to problem \eqref{one_d_max_dual_discrete_problem},
which, by Theorem \eqref{prekopa_theorem}, is of the form \eqref{Q_poly_form}.
Then noting that \eqref{P_Q_equality}
still holds for a polynomial $P$ of the form \eqref{P_poly_form},
which is of degree $k$ and satisfies $P(i)\ge 0$ for $i\in\{0,1,\ldots,n-1\}$, and $P(n)=1$.
\end{proof}
\begin{proof}[Proof of Lemma \ref{lem: S and As are same}]
Define a distribution $\Q_S$ on $\{0,1\}^n$ by
\begin{equation*}
Q_S(\{x\}) = \P(S=|x|) \cdot {n \choose {|x|}}^{-1}
\end{equation*}
for $x\in\{0,1\}^n$, where $|x|$ is the number of $1$'s in $x$.
By definition, $Q_S$ is symmetric and $S$ has the distribution of the number of $1$'s in $\Q_S$.
It remains to verify that each bit has marginal probability $p$, and the $k$-wise independence property.
Let $\tilde{S}$ be a random variable with the $\Bin(n,p)$ distribution;
i.e., $\P(\tilde{S} = i) = \Choose{n}{i}p^i(1-p)^{n-i}$.
It is straight-forward to verify that $\Q_{\tilde{S}}$ is the distribution of
$n$ independent $\text{Bernoulli}(p)$ random variables.
Fix $1\le i_1<i_2<\cdots <i_k\le n$ and $y_1,\ldots,y_k\in\{0,1\}$.
Let $j$ be the number of 1's in $(y_1,\ldots, y_k)$. Note that
\begin{equation*}
\begin{split}
\Q_S\big( \{ x &  \in \set{0,1}^n \ :  \
x_{i_1}= y_1,\ldots, x_{i_k}=y_k \} \big) = \sum_{i=j}^{n-k+j} \Choose{n-k}{i-j}\frac{\P(S=i)}{\Choose{n}{i}}\\
&=\frac{(n-k)!}{n!}\sum_{i=j}^{n-k+j}\P(S=i)\prod_{m=0}^{j-1}(i-m)\prod_{m=0}^{k-j-1}(n-m-i) = \E P_j(S),
\end{split}
\end{equation*}
where $P_j$ is defined by
\begin{equation*}
P_j(z) = \frac{(n-k)!}{n!}\prod_{m=0}^{j-1}(z-m)\prod_{m=0}^{k-j-1}(n-m-z).
\end{equation*}
Since $P_j$ is a polynomial of degree $k$ and since $S$ has the same first $k$ moments as $\tilde{S}$,
\begin{equation*}
\begin{split}
\Q_S(\{ x &  \in \set{0,1}^n \ :  \ x_{i_1}= y_1,\ldots, x_{i_k}=y_k \})= \E P_j(S) \\ & = \E P_j(\tilde{S})
= \Q_{\tilde{S}}(\{ x  \in \set{0,1}^n \ :  \ x_{i_1}= y_1,\ldots, x_{i_k}=y_k \}),
\end{split}
\end{equation*}
as required.
\end{proof}
\begin{proof}[Proof of \eqref{value_of_tilde_M}]
Following the methods of the classical moment problem,
we use the equivalence of \eqref{one_d_max_dual_continuous_problem} and
\eqref{one_d_max_dual_continuous_problem_sq_form} and solve the
latter problem. Fix $n\in\N, 0<p<1$ and an even $k\in\N$. Let
$P=R^2$ for a polynomial $R$ of degree at most $k/2$ satisfying
$|R(n)|\ge 1$. Let $\{K_i\}_{i=0}^n$ be the $(n,p)$-Krawtchouk
polynomials; i.e., the orthogonal polynomials corresponding to the
$\Bin(n,p)$ distribution normalized so that $\E_{\Bin(n,p)} K_i^2(Z)=1$. Write
\begin{equation*}
R(z)=\sum_{i=0}^{k/2} a_iK_i(z).
\end{equation*}
Note that
\begin{equation*}
\E_{\Bin(n,p)} P(Z) = \E_{\Bin(n,p)} R^2(Z) = \sum_{i=0}^{k/2} a_i^2.
\end{equation*}
Hence the problem \eqref{one_d_max_dual_continuous_problem_sq_form} reduces to minimizing
 $\sum_{i=0}^{k/2} a_i^2$ under the constraint that $|R(n)| = |\sum_{i=0}^{k/2} a_iK_i(n)| \ge 1$.
 By Cauchy-Schwarz,
\begin{equation*}
\sum_{i=0}^{k/2} a_i^2
\sum_{i=0}^{k/2}K_i^2(n) \ge
\Big(\sum_{i=0}^{k/2} a_i K_i(n)\Big)^2 \ge 1.
\end{equation*}
Hence the optimal value of the problem
\eqref{one_d_max_dual_continuous_problem_sq_form} is
$\frac{1}{\sum_{i=0}^{k/2} K_i^2(n)}$ and the optimal polynomial
is (up to multiplication by $(-1)$)
\begin{equation*}
R(z)=\frac{1}{\sum_{i=0}^{k/2} K_i^2(n)}\sum_{i=0}^{k/2} K_i(n)K_i(z).
\end{equation*}
Since the Krawtchouk polynomials equal \cite{Sz75}
\begin{equation*}
K_i(x)=\Choose{n}{i}^{-\frac{1}{2}}\left(p(1-p)\right)^{-\frac{i}{2}}\sum_{j=0}^i
(-1)^{i-j} \Choose{n-x}{i-j}\Choose{x}{j}p^{i-j}(1-p)^{j},
\end{equation*}
and in particular
\begin{equation*}
K_i(n)=\Choose{n}{i}^{\frac{1}{2}}\left(\frac{1-p}{p}\right)^{\frac{i}{2}},
\end{equation*}
we deduce \eqref{value_of_tilde_M}.
\end{proof}
\end{section}
\end{document}